\documentclass{amsart}
\usepackage{amssymb,amsthm,amscd}
\usepackage[pdftex]{graphicx}
\include{diagrams}
\usepackage[pagebackref]{hyperref}

\newcommand{\rsp}{\raisebox{0em}[2ex][1.3ex]{\rule{0em}{2ex} }}
\newcommand{\smatr}[4]{(\begin{smallmatrix} 
             #1 & #2 \\ #3 & #4 \end{smallmatrix})}

\newcommand{\N}{{\mathbb N}}
\newcommand{\Z}{{\mathbb Z}}
\newcommand{\bP}{{\mathbb P}}
\newcommand{\Q}{{\mathbb Q}}

\newcommand{\cB}{{\mathcal B}}
\newcommand{\cO}{{\mathcal O}}

\newcommand{\eps}{\varepsilon}
\newcommand{\fa}{{\mathfrak a}}
\newcommand{\fb}{{\mathfrak b}}

\newcommand{\bbeta}{\overline{\beta}}

\newcommand{\bmu}{\overline{\mu}}

\newcommand{\bpi}{\overline{\pi}}
\newcommand{\brho}{\overline{\rho}}
\newcommand{\SL}{{\operatorname{SL}}}
\newcommand{\Cl}{{\operatorname{Cl}}}
\newcommand{\Gal}{{\operatorname{Gal}}}
\newcommand{\disc}{{\operatorname{disc}}}
\newcommand{\Tr}{{\operatorname{Tr}}}
\newcommand{\lra}{\longrightarrow}
\newcommand{\Lra}{\Longrightarrow}

\newcommand{\gen}{{\operatorname{gen}}}
\newcommand{\hphi}{\widehat{\phi}}
\newcommand{\by}{\overline{y}}
\newcommand{\bT}{\overline{T}}
\newcommand{\tT}{\widetilde{T}}
\newcommand{\la}{\langle}
\newcommand{\ra}{\rangle}
\newcommand{\ov}{\overline}

\newtheorem{thm}{Theorem}
\newtheorem{prop}[thm]{Proposition}
\newtheorem{lem}[thm]{Lemma}
\newtheorem{cor}[thm]{Corollary}

\title{Hilbert $2$-Class Fields and $2$-Descent}
\author{F. Lemmermeyer}
\begin{document}

\begin{abstract}
We give a construction of unramified cyclic octic extensions of 
certain complex quadratic number fields. The binary quadratic
form used in this construction also shows up in the theory of
$2$-descents on Pell conics and elliptic curves, as well as in 
the explicit description of cyclic quartic extensions.
\end{abstract}

\maketitle

\begin{center} \today \end{center}

\section*{Introduction}

In this article we discuss several apparently unrelated problems 
involving integers of the form $m = a^2 + 4b^2 \equiv 1 \bmod 4$. 
A central role in the solution of these problems is played by the 
family of binary quadratic forms $Q_b = (b,a,-b)$ with discriminant $m$;
here and below, $Q = (A,B,C)$ denotes the binary quadratic form
$Q(X,Y) = AX^2 + BXY + CY^2$ with discriminant $\Delta = B^2 - 4AC$.

The first problem concerns the solvability of the negative Pell 
equation 
$$ T^2- mU^2 = -4 $$ 
and the computation of its fundamental solution. Results going back 
to Euler \cite{Euler} show that such a solution can be computed from 
an integral solution of the equation $Q_b(r,s) = 1$.

The second problem is the explicit construction of octic 
cyclic unramified  extensions $L/k$ of the quadratic number 
field $k = \Q(\sqrt{-m}\,)$. The construction of its quartic 
subextension $K = k(i,\sqrt{a+2bi}\,)$ is almost trivial, and 
we will see that explicit generators  $\mu$ of the quadratic 
extension $L = K(\sqrt{\mu}\,)$ can be written down explicitly 
using nontrivial solutions of the diophantine equation 
$Q_b(r,s) = 2x^2$.

Afterwards we will briefly explain why the forms $Q_b$ also play a
role in performing $2$-descent on certain families of elliptic curves, 
such as those of the form $E: y^2 = x(x^2 + p)$, where $p = a^2 + 4b^2$ 
is prime; here the existence of rational points on $E$ is tied to 
the pair of equations $r^2 + s^2 = X^2$ and $Q_b(r,s) = Y^2$.

Finally we mention a few other problems in which these 
forms $Q_b$ have shown up. It is clear that most of the
problems discussed here may be generalized considerably. 
In particular, studying Pell descent or Hilbert $2$-class fields
should by no means be restricted to the special cases of the 
negative Pell equation or discriminants of the form $-4m$. 

This article is written in the language of quadratic forms
(although ideals show up occasionally). For understanding the
results it is sufficient to know the most elementary basics;
for readers who would like to read more about reduction and composition 
of forms, I strongly recommend the books by Flath \cite{Fla} and 
Cox \cite{Cox}, as well as the recent contributions by Bhargava 
\cite{Bha}. The first section of \cite{LPep} gives a brief 
introduction to Bhargava's ideas, and a detailed elementary 
account can be found in Barker-Hoyt's thesis \cite{BH}.

\section{The Quadratic Space of Binary Quadratic Forms}
Before we start investigating examples of descent on Pell conics,
let us show how the main actor in our play shows up in a more
general setting.

Let $R$ be a domain with characteristic $\ne 2$, and consider the set 
$\cB = \cB_R$ of binary quadratic forms $Q(X,Y) = AX^2 + BXY + CY^2$, 
often abbreviated by $Q = (A,B,C)$, with $A, B, C \in R$. 
We define a bilinear map $\cB \times \cB \lra R$ via 
$$ \la Q_1 , Q_2 \ra = B_1B_2 - 2A_1B_2 - 2A_2B_1, $$ 
where $Q_j = (A_j,B_j,C_j) \in \cB$. Clearly 
$\la Q_1 , Q_2 \ra = \la Q_2 , Q_1 \ra$, and 
$\la Q , Q \ra= \disc(Q) = B^2 - 4AC$ is the discriminant of $Q$.

Matrices $S = \smatr{r}{s}{t}{u} \in \SL_2(R)$ act on $\cB$ via 
$Q|_S = Q'$, where
\begin{equation}\label{EQS}
  Q'(x,y) = Q \big((x,y)S'\big)  = Q(rx+sy,tx+uy),
\end{equation}
(here $S'$ denotes the transpose of $S$). It is an easy exercise to 
show that 
\begin{equation}\label{Eactb}
              \la {Q_1}|_S, {Q_2} \ra = \la Q_1, Q_2|_{S'} \ra
\end{equation} 
for $S \in \SL_2(\Q)$. This identity is the essential content of the
``Cantor diagrams'' in  \cite[Thm. 2]{Pall}, \cite[Prop. 3.3]{Brw1},
and \cite[Thm. 2]{Brw2}.

From now on assume that $R = \Z$. If the form $Q_1 = (1,0,-m)$ 
represents $-1$, say $r^2 - ms^2 = -1$, then $Q_2 = (ms,2r,s)$ 
is a form with discriminant $-4$. Therefore $Q_2$ is equivalent
to $(1,0,1)$, say $Q_2 = (1,0,1)|_S$ for some $S \in \SL_2(\Z)$.
Write $Q_1|_{S'} = (a,2b_1,c)$ for integers $a, b_1, c$; then 
Equation (\ref{Eactb}) shows that $a+c = 0$, i.e., 
$Q_1|_{S'} = (a,2b_1,-a)$. Since $Q_1$ and $Q_1|_{S'}$ both have
discriminant $4m$, we must have $m = a^2 + b_1^2$. Observe that we 
have shown that if $(1,0,-m)$ represents $-1$, then $m = a^2 + b_1^2$ 
is a sum of two squares.

Replacing the principal form $(1,0,-a)$ with discriminant $4a$ by 
$Q_0 = (1,1,\frac{1-a}4)$ with discriminant 
$a = a^2 + 4b^2 \equiv 1 \bmod 4$ results in replacing $(a,2b_1,-a)$ 
by $Q_b = (b,a,-b)$. This is the form that will play a central role
in the problems described below: performing a second descent on Pell 
conics and elliptic curves, constructing Hilbert class fields of 
quadratic number fields with discriminant $-4m$, or in Hasse's 
description of the arithmetic of cyclic quartic number fields.

The pair of orthogonal forms $Q_1 = (1,0,1)$ and $Q_2 = (b,a,-b)$ will 
occur explicitly in our description of the second $2$-descent on the
elliptic curve $y^2 = x(x^2 - 4p)$ for primes $p = a^2 + 4b^2$.
In fact we will see that finding a rational point on $E$ is equivalent
to finding a simultaneous representation $Q_1(r,s) = X^2$ and 
$Q_2(r,s) = Y^2$ of squares by these forms. 

This problem can be reformulated as follows. Let $Q = (A,B,C) \in \cB_R$ 
be a binary quadratic form over a domain $R$ as above, and let $F$ be 
the quotient field of $R$. We can evaluate $Q$ on $\bP^1 F$ as follows: 
for each point $P = [x:y] \in \bP^1 F$ we set $P = Q(x,y)$ with values in 
$F^\times/F^{\times\,2}$. Given a pair $(Q_1,Q_2)$ of orthogonal forms, the 
pair of simultaneous equations $Q_1(r,s) = X^2$ and $Q_2(r,s) = Y^2$ is 
now equivalent to the existence of a point $P \in \bP^1 \Q$ with 
$Q_1(P) = Q_2(P) = 1$.

\section{ $2$-Descent on Pell Conics}\label{SN1}
Let $m = 4n+1$ be a squarefree integer, and consider 
the Pell equation
\begin{equation}\label{Epelln}
   Q_0(T,U) = T^2 - TU  - nU^2 = 1,
\end{equation}
where $Q_0(X,Y) = X^2 - XY - nY^2$ is the principal binary quadratic form
with discriminant $4n+1 = m$. 

Multiplying through by $4$ and completing the square shows that this
equation can also be written in the more familiar way
$$   (2T-U)^2 - m U^2 = 4. $$

\subsection{First $2$-Descent}
In this section we will present methods for finding a nontrivial
integral solution of (\ref{Epelln}) as well as criteria for the
solvability of the negative Pell equation 
\begin{equation}\label{E1Dn} 
      Q_0(T,U) = -1.
\end{equation}

To this end we write the Pell equation $ (2T-U)^2 - m U^2 = 4$ in the form
$m U^2 = (2T-U)^2 - 4 = (2T-U-2)(2T-U+2)$. 

Now $$ \gcd(2T-U-2,2T-U+2) = \begin{cases}
                           4 & \text{ if $U$ is even}, \\ 
                           1 & \text{ if $U$ is odd}; \\ 
       \end{cases} $$
in both cases we find that there exist integers $c, d, r, s$ with 
$cd = m$ and $2T-U+2 = cr^2$ and $2T-U-2 = ds^2$, and so
$cr^2 - ds^2 = 4$.

The fact that $r \equiv s \bmod 2$ allows us to make the
substitution $s = S$ and $r = 2R-S$, which transforms the 
equation $cr^2 - ds^2 = 4$ into 
\begin{equation}\label{Edesc1}
     cR^2 - cRS + \frac{c-d}4 S^2 = 1. 
\end{equation}     
Observe that the quadratic form $cx^2 - dy^2$ has discriminant $4cd = 4m$, 
whereas the form on the left hand side of (\ref{Edesc1}) has discriminant 
$c^2 - c(c-d) = cd = m$.

Thus each integral solution of the Pell equation comes from a solution of 
one of the equations (\ref{Edesc1}) for some factorization $cd = m$ (see 
\cite{LPd} for details). In fact, Legendre claimed and Dirichlet proved 
that exactly four among these equations\footnote{If $m = 5$, for example, 
the solvable equations correspond to the factorizations 
$(c,d) = (1,5)$, $(5,1)$, $(-1,-5)$, and $(-5,-1)$. If 
$m = 34$, on the other hand, the solvable equations come from
$(c,d) = (1,34)$, $(-34,-1)$, $(2,17)$ and $(-17,-2)$.}
have solutions in integers:

\begin{prop}
First $2$-descent on Pell conics: If $m = a^2 + 4b^2 = p_1 \cdots p_t$ 
is a product of $t$ disctinct primes $p_j \equiv 1 \bmod 4$, then among the 
$2^{t+1}$ descendants (\ref{Edesc1}) with $cd = m$ there are exactly 
four that have solutions in integers.
\end{prop}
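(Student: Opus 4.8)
The plan is to translate the solvability of the descendant (\ref{Edesc1}) into a statement about the narrow form class group of discriminant $m$, and then to count by a homomorphism argument. First I would record that a signed factorization $cd=m$ (with $c,d$ of the same sign, both $\equiv\pm1\bmod 4$) is the same datum as a subset $T\subseteq\{1,\dots,t\}$ together with a sign $\eps\in\{\pm1\}$, where $c=\eps\prod_{j\in T}p_j$; there are indeed $2^{t+1}$ of these. Each such factorization produces the form $Q_{c,d}=(c,-c,\frac{c-d}4)$ appearing in (\ref{Edesc1}), which has $\disc=cd=m$ and is ambiguous (being of the shape $(A,-A,C)$). The key reformulation is that (\ref{Edesc1}) has an integral solution precisely when $Q_{c,d}$ represents $1$; since a proper representation of $1$ completes to a matrix in $\SL_2(\Z)$, this happens exactly when $Q_{c,d}$ lies in the principal class of the narrow form class group $\Cl^+(m)$.

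Next I would set up the counting map. The group of signed factorizations is $(\Z/2\Z)^{t+1}$ under ``multiply the signs and take the symmetric difference of the prime sets''. I claim the assignment $\Phi\colon (c,d)\mapsto[Q_{c,d}]$ is a homomorphism onto the subgroup $\Cl^+(m)[2]$ of ambiguous classes. The homomorphism property is the heart of the matter: passing to ideals, $Q_{c,d}$ corresponds to the class of $\prod_{j\in T}\fp_j$ (twisted by the sign class $[-Q_0]$ carried by the non‑totally‑positive element $\sqrt{m}$, which satisfies $(\sqrt{m})=\prod_j\fp_j$), where $\fp_j$ is the ramified prime above $p_j$; since $\fp_j^2=(p_j)$ is principal, products of factorizations go to products of these classes, so $\Phi$ is multiplicative. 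Surjectivity onto $\Cl^+(m)[2]$ is the classical theorem that every ambiguous class contains such a ramified‑prime form, which is exactly the content of Gauss's theory of ambiguous classes.

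Finally I would count. Because all $p_j\equiv1\bmod 4$, the discriminant $m$ is a product of $t$ prime discriminants, so genus theory gives $\#\Cl^+(m)[2]=2^{t-1}$ (equivalently, there are $2^{t-1}$ genera). With $\Phi$ a surjective homomorphism from a group of order $2^{t+1}$ onto one of order $2^{t-1}$, every fibre, and in particular the fibre over the identity, has exactly $2^{t+1}/2^{t-1}=4$ elements. Since the fibre over the identity is precisely the set of factorizations with $Q_{c,d}$ principal, i.e.\ the solvable descendants, this yields the asserted four. As a consistency check one verifies via genus characters that $\Phi$ ignores the sign $\eps$ (both $\pm1$ are squares modulo each $p_j\equiv1\bmod 4$), so solvable descendants occur in $\pm$ pairs, matching the sign generator in the kernel.

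The step I expect to be the main obstacle is establishing that $\Phi$ is a well‑defined surjective homomorphism onto the ambiguous classes: multiplicativity rests on the Gauss‑composition (or ideal) computation above, and surjectivity is the nontrivial classical input. A secondary point demanding care is the distinction between representing $+1$ and $-1$, and hence the use of the narrow (proper‑equivalence) class group rather than the wide one; a pure principal‑genus count would overcount, since an ambiguous class lying in the principal genus need not be principal (as happens whenever $\Cl^+(m)$ contains an element of order $4$).
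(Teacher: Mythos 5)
The paper itself offers no proof of this proposition: it is quoted as a classical result of Legendre and Dirichlet with the details deferred to \cite{LPd}, so there is no internal argument to measure yours against. Your route --- identify the descendant forms $(c,-c,\tfrac{c-d}4)$ with the ambiguous forms of discriminant $m$, note that solvability of (\ref{Edesc1}) means membership in the principal class of the narrow class group $\Cl^+(m)$, and count fibres of a homomorphism $\Phi$ from the group of $2^{t+1}$ signed factorizations onto $\Cl^+(m)[2]$, which genus theory says has order $2^{t-1}$ --- is the standard Gauss--Dirichlet--R\'edei argument, and its skeleton is sound. You are also right to flag surjectivity as the real content, but be aware of how much weight it carries: since the domain has order $2^{t+1}$ and the target order $2^{t-1}$, surjectivity of $\Phi$ is \emph{equivalent} to the kernel having order $4$, i.e.\ to the proposition itself. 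So the quoted classical fact (every narrow class of order dividing $2$ contains an ambiguous form, and there are exactly $2^{t-1}$ such classes) is not a lemma feeding the proof; it \emph{is} the proof, and it must be invoked in the narrow (proper-equivalence) sense, exactly as you say in your final paragraph.

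The one genuine error is the closing ``consistency check.'' It is false that $\Phi$ ignores the sign $\eps$: the forms $Q_{c,d}$ and $Q_{-c,-d}=-Q_{c,d}$ lie in the same \emph{genus} (which is all the genus characters can detect, since $-1$ is a square modulo each $p_j$), but in $\Cl^+(m)$ they differ by the class of the negative principal form, equivalently by the narrow class of $(\sqrt{m}\,)$, and this class is nontrivial precisely when the fundamental unit has norm $+1$. In that case the four solvable descendants pair up as $(c,d)\leftrightarrow(-d,-c)$, not as $(c,d)\leftrightarrow(-c,-d)$. For instance, for $m=205=5\cdot 41$ one has $5x^2-5xy-9y^2=1$ at $(2,1)$ and $-41x^2+41xy-9y^2=1$ at $(1,3)$, while $41x^2-41xy+9y^2$ represents $-1$ at $(1,3)$ and cannot also represent $+1$; the paper's own footnote for $m=34$ shows the same phenomenon. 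This slip is precisely the genus-versus-class confusion you warn against two sentences later; it does not damage the main count, which never uses it, but the check should be deleted or replaced by the correct pairing $(c,d)\leftrightarrow(-d,-c)$, which reflects the fact that $(1,m)$, $(-m,-1)$ and the nontrivial pair form a Klein four-group inside the factorization group.
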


In particular, apart from the trivial descendants $1 = x^2 - m y^2$ and 
$1 = -m x^2 + y^2$ there is a unique nontrivial pair $(c,d)$ with
$cd = m $ such that $1 = cx^2 - dy^2$ and $1 = -dx^2 + cy^2$
have integral solutions.

\subsection*{Special Cases}
If $m  = p$ is prime, all four factorizations of $m $ must lead
to solvable equations. In particular, $T^2 - pU^2 = -1$ is solvable
in integers. This result is due to Legendre.

If $m  = pq$ for primes $p \equiv  q \equiv 1 \bmod 4$, we have
the following essentially different descendants, written using forms 
with discriminant $4pq$:
$$ x^2 - pqy^2 = 4, \quad pqx^2 - y^2 = 4, \quad 
   px^2 - qy^2 = 4, \quad qx^2 - py^2 = 4. $$
If $(\frac pq) = -1$, it is immediately clear that the last two 
equations do not have rational (let alone integral) solutions; this implies

\begin{cor}
If $p \equiv q \equiv 1 \bmod 4$ are primes with $(\frac pq) = -1$, the
negative Pell equation (\ref{E1Dn}) is solvable in integers.
\end{cor}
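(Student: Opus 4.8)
The plan is to identify the negative Pell equation (\ref{E1Dn}) with one specific descendant and then to use the counting in the Proposition to force its solvability. First I would record the dictionary between $Q_0(T,U) = -1$ and the descendants $cr^2 - ds^2 = 4$ with $cd = m = pq$. Completing the square gives $Q_0(T,U) = -1 \Leftrightarrow pqU^2 - (2T-U)^2 = 4$, so setting $x = U$ and $y = 2T-U$ shows that (\ref{E1Dn}) is solvable in integers if and only if the descendant $pqx^2 - y^2 = 4$, i.e. the factorization $(c,d) = (pq,1)$, has an integral solution; the parity condition $x \equiv y \bmod 2$ needed to recover $T$ from a solution $(x,y)$ is automatic because $pq$ is odd.

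Next I would organize the eight factorizations $cd = pq$ into four pairs. Swapping the two variables turns $cx^2 - dy^2 = 4$ into $-dx^2 + cy^2 = 4$, so the descendant for $(c,d)$ is solvable exactly when the one for $(-d,-c)$ is; hence the eight descendants fall into the four pairs represented by $x^2 - pqy^2 = 4$, $\ pqx^2 - y^2 = 4$, $\ px^2 - qy^2 = 4$, and $qx^2 - py^2 = 4$. Since solvability is constant on each pair, the Proposition's count of exactly four solvable descendants among the eight says precisely that exactly two of these four representatives are solvable. The first, $x^2 - pqy^2 = 4$, is solvable trivially via $(x,y) = (2,0)$, so exactly one of the remaining three is solvable.

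The key step is to eliminate the two mixed representatives $px^2 - qy^2 = 4$ and $qx^2 - py^2 = 4$ under the hypothesis $(\frac pq) = -1$. Here I would argue by a local obstruction: a rational solution of $px^2 - qy^2 = 4$, cleared of denominators to $pX^2 - qY^2 = 4Z^2$ with $\gcd(X,Y,Z) = 1$, reduces mod $q$ to $pX^2 \equiv 4Z^2$; if $q \nmid Z$ this forces $p$ to be a quadratic residue mod $q$, while if $q \mid Z$ one obtains $q \mid X$ and then $q \mid Y$, contradicting primitivity. Thus $(\frac pq) = -1$ rules out even rational solutions of $px^2 - qy^2 = 4$, and the symmetric argument mod $p$ rules out $qx^2 - py^2 = 4$ once we note $(\frac qp) = (\frac pq) = -1$ by quadratic reciprocity (valid since $p \equiv q \equiv 1 \bmod 4$). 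With both mixed representatives unsolvable, the unique remaining solvable representative must be $pqx^2 - y^2 = 4$, which by the opening dictionary means (\ref{E1Dn}) is solvable.

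I expect the main obstacle to be bookkeeping rather than conceptual: making sure the variable-swap pairing and the $(c,d) \leftrightarrow (-d,-c)$ identification are stated so that ``four of eight solvable'' cleanly becomes ``two of four representatives solvable,'' and being careful in the mod-$q$ argument to treat the case $q \mid Z$, so that the conclusion is genuinely ``no rational solution'' and not merely ``no integral solution.''
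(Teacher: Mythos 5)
Your proof is correct and follows the same route as the paper: invoke the Legendre--Dirichlet count of exactly four solvable descendants, observe that the two mixed descendants $px^2-qy^2=4$ and $qx^2-py^2=4$ have no rational solutions when $(\frac pq)=-1$, and conclude that the descendant corresponding to the negative Pell equation must be among the solvable ones. The paper simply asserts the rational unsolvability as ``immediately clear''; you have filled in the local obstruction and the pairing bookkeeping, which is exactly the intended argument.
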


If $(\frac pq) = 1$, on the other hand, further descents are necessary for 
deciding which of the equations (\ref{Edesc1}) are solvable.

\subsection{Second $2$-Descent}\label{S1.2}

Let us show how to do such a second descent. We start with the equation
$T^2 - m U^2 = -4$ and assume that it has an integral solution. Then 
$m U^2 = T^2 + 4 = (T+2i)(T-2i)$.

Now $T$ is easily seen to be either odd or divisible by $4$. Thus
$\gcd(T+2i,T-2i) = 1$ in the first and $\gcd(T+2i,T-2i) = 2i = (1+i)^2$
in the second case. This shows that we must have 
$\mu \rho^2 = j(T+2i)$ for some unit $j = i^k$ and some 
$\mu, \rho \in \Z[i]$ with $\mu \bmu = m $ and 
$\rho\brho = u$. Subsuming $j$ into $\mu$ then gives 
$\mu \rho^2 = T+2i$. 

A simple calculation shows that $\mu \equiv 1 \bmod 2$; 
writing $\mu = a + 2bi$ (recall that $m  = \mu \bmu = a^2 + 4b^2$)
and $\rho = x+yi$, and comparing real and imaginary parts we find that
$$ T = ax^2 - 4bxy - ay^2, \qquad 1 = bx^2 + axy - by^2. $$ 

We have proved the following  

\begin{prop}\label{PrQan}
Let $m $ be an odd squarefree natural number. The negative Pell equation 
(\ref{E1Dn}) has an integral solution if and only if there exist integers 
$a, b \in \N$ with $m  = a^2 + 4b^2$ such that the diophantine 
equation
\begin{equation}\label{E2Dn}
   Q_b(x,y) =  bx^2 + axy - by^2 = 1 
\end{equation}
has an integral solution. In this case, 
$$ T = ax^2 - 4bxy - ay^2, \quad U = x^2 + y^2 $$
is a solution of the original equation $Q_0(T) = -1$.
\end{prop}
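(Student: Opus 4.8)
The plan is to carry out the entire argument inside the Gaussian integers $\Z[i]$, proving both implications from the single identity $\mu\rho^2 = T+2i$ and reading off the explicit formulas for $T$ and $U$ by comparing real and imaginary parts. The arithmetic input is unique factorization in $\Z[i]$ together with the splitting behavior of the primes dividing the odd squarefree integer $m$.

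First I would treat the forward direction (negative Pell solvable $\Rightarrow$ $Q_b(x,y)=1$ solvable). Completing the square turns $Q_0(T,U)=-1$ into $(2T-U)^2 - mU^2 = -4$; writing $T$ for $2T-U$ we work with $T^2 + 4 = mU^2$, hence the factorization $(T+2i)(T-2i) = mU^2$ in $\Z[i]$. The crucial step is a descent: since $T$ is odd or divisible by $4$, the factors $T\pm 2i$ are coprime away from the ramified prime $1+i$, and because $m$ is odd and squarefree each rational prime $p \mid m$ splits as $p = \pi\bpi$ with $\pi \nmid (1+i)$. Comparing $\pi$-adic valuations and using squarefreeness forces the exponent of each such $\pi$ in $T+2i$ to be odd, so collecting one prime from each conjugate pair into a single factor $\mu$ yields $T+2i = j\,\mu\rho^2$ with a unit $j=i^k$, where $\mu\bmu = m$ and, on taking norms, $\rho\brho = U$. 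Absorbing $j$ and a sign into $\mu$ normalizes it to the shape $a+2bi$ with $a$ odd, so that $\mu \equiv 1 \bmod 2$ and $m = \mu\bmu = a^2+4b^2$; writing $\rho = x+yi$ and expanding $\mu\rho^2 = T+2i$, the imaginary part gives $1 = bx^2 + axy - by^2 = Q_b(x,y)$ and the real part gives $T = ax^2 - 4bxy - ay^2$, while $U = \rho\brho = x^2+y^2$.

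For the converse I would run the same identity backwards. Given $a,b$ with $m = a^2+4b^2$ and a solution of $Q_b(x,y)=1$, set $\mu = a+2bi$, $\rho = x+yi$, and \emph{define} $T+2i := \mu\rho^2$; the imaginary-part computation is consistent exactly because $Q_b(x,y)=1$, and the real part yields $T = ax^2 - 4bxy - ay^2$. Taking norms and using multiplicativity,
\begin{equation*}
  T^2 + 4 = N(\mu\rho^2) = N(\mu)\,N(\rho)^2 = m\,(x^2+y^2)^2 = m\,U^2,
\end{equation*}
so $T^2 - mU^2 = -4$, which upon undoing the relabelling is precisely $Q_0=-1$ with the stated $T$ and $U$.

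I expect the descent in the forward direction to be the main obstacle. One must argue carefully that unique factorization in $\Z[i]$, together with the squarefreeness of $m$ and the near-coprimality of $T+2i$ and $T-2i$, really forces the Gaussian factor of $m$ dividing $T+2i$ to be a genuine integer $\mu$ of norm exactly $m$ rather than $m$ times a non-square, and that in the case $4 \mid T$ the ramified prime $1+i$ contributes only an even power that can be swept into $\rho^2$, leaving $\mu$ of odd norm. The converse and the explicit formulas are, by contrast, a mechanical expansion and a norm computation.
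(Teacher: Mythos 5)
Your proof is correct and follows essentially the same route as the paper: it factors $T^2+4 = (T+2i)(T-2i)$ in $\Z[i]$, extracts $\mu\rho^2 = T+2i$ with $\mu\bmu = m$ by the coprimality/unique-factorization descent, normalizes $\mu = a+2bi$, and reads off $T$, $U$ and $Q_b(x,y)=1$ by comparing real and imaginary parts. The only difference is that you spell out the easy converse explicitly via the norm computation $N(\mu\rho^2) = mU^2$, which the paper leaves implicit.
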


\medskip \noindent{\bf Example 1.}
Let $m = 41 = 5^2 + 4 \cdot 2^2$; for checking the solvability of 
$T^2 - TU - 10U^2 = -1$ we have to consider $Q(r,s) = 2x^2 + 5xy - 2y^2 = 1$.
We find the solution $x = 3$, $y = -1$, hence $T = 5\cdot 3^2 + 8 \cdot 3 - 5 = 64$  
and $U = 3^2 + 1^2 = 10$. Thus $(T,U) = (64,10)$ is the fundamental solution
of the negative Pell equation, and in fact we have
$64^2 - 41 \cdot 10^2 = -4$, or $32^2 - 41 \cdot 5^2 = -1$ and
$37^2 - 37 \cdot 10 - 10 \cdot 10^2 = -1$.

\medskip \noindent{\bf Example 2.}
Let $m = 221$. Then $m = 5^2 + 14^2 = 11^2 + 10^2$, so we have to 
solve the equations
$$  7x^2 +  5xy - 7y^2 = 1  \quad \text{or} \quad
    5x^2 + 11xy - 5y^2 = 1. $$   
These equations can be written in the form
$$ (14x + 5y)^2 - m y^2 = 28 \quad \text{and} \quad
   (10x + 11y)^2 - m y^2 = 44.  $$ 
Neither of these equations has a solution since 
$(\frac7{13}) = (\frac{11}{13}) = -1$; thus the negative 
Pell equation $T^2 - 221U^2 = -4$ is not solvable.

\medskip \noindent{\bf Example 3.}
If $m = 4777 = 17 \cdot 281 = 59^2 + 4 \cdot 18^2 = 69^2 + 4 \cdot 2^2$, 
then we have to check the solvability of the equations
$$  18x^2 + 59xy - 18y^2 = 1, \quad \text{and} \quad 
     2x^2 + 69xy - 2y^2  = 1. $$
For solving the first equation, observe that $\xi = \frac xy$
is approximately equal to one of the roots of the equation
$18\xi^2  + 59\xi - 18 = 0$, that is, to $\xi \approx 3.56$ or to 
$\xi \approx 0.28$. Using these approximations it is easy to solve 
the first equation by a brute force computation: we find
$(x,y) = (587,2089)$, hence 
$t = \frac12(- 59x^2 + 72xy + 59y^2) = 162715632$ 
and $u = \frac12(x^2+y^2) = 2354245$. 

The second form $Q = (2,69,-2)$ is not equivalent to the principal form
and generates a class of order $2$. In fact, we have 
$Q(587,-17) = 9$, so $Q \sim Q_1^2$ for some form $Q_1 = (3,*,*)$, 
which is not contained in the principal genus, hence cannot be
equivalent to the principal form. Equivalently, the ambiguous
form $Q_{17} = (17,17,-66)$ generates a class of order $2$, and
Gauss composition shows $Q \cdot Q_{17} \sim Q_2 = (2,69,-2)$.

\subsection*{Rational Solvability}
A necessary condition for the solvability of a diophantine equation
in integers is its solvability in rational numbers. Applied to the 
negative Pell equation (\ref{E1Dn}), this gives us the classical 
observation that $(\frac{-1}p) = +1$ for all primes $p \mid m$, 
which is equivalent to $m$ being a sum of two squares. 

Although the solvability of (\ref{E1Dn}) in integers is equivalent to the
solvability of one of the equations (\ref{E2Dn}) in integers, we get stronger
conditions by applying the above observation to (\ref{E2Dn}). In fact, the 
solvability of (\ref{E2Dn}) in rational numbers is easy to check: 

\begin{lem}
Equation (\ref{E2Dn}) has a solution in rational numbers if and only
if $(\frac ap) = +1$ (or, equivalently, $(\frac bp) = +1$) for all 
primes $p \mid m$.
\end{lem}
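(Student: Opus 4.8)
The plan is to analyze solvability of $Q_b(x,y) = bx^2 + axy - by^2 = 1$ over $\Q$ using the Hasse-Minkowski local-global principle. Since $Q_b$ represents $1$ over $\Q$ if and only if the ternary form $bx^2 + axy - by^2 - z^2$ has a nontrivial zero, and a ternary quadratic form over $\Q$ has a nontrivial rational zero if and only if it does so over $\R$ and over every $\Q_p$. The discriminant of $Q_b$ is $a^2 + 4b^2 = m > 0$, so $Q_b$ is indefinite and solvability over $\R$ is automatic; the content of the lemma is therefore the collection of local conditions at the finite primes.

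First I would dispose of the primes not dividing $2m$: at any such $p$ the form $Q_b$ has unit discriminant and unit coefficients can be arranged, so it represents every class in $\Q_p^\times/\Q_p^{\times 2}$ and in particular $1$. The prime $p = 2$ requires separate care but should also impose no obstruction, since $m \equiv 1 \bmod 4$ forces one of $a, b$ to behave well $2$-adically; I expect the $2$-adic condition to be automatically satisfied given $m = a^2 + 4b^2$. Thus the only genuine constraints come from the odd primes $p \mid m$. At such a $p$ we have $m \equiv 0$, so $a^2 \equiv -4b^2 \bmod p$; since $p \nmid 2b$ (as $m$ is squarefree and $p \mid m$), both $a$ and $b$ are units mod $p$. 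Completing the square or diagonalizing $Q_b$ over $\Q_p$, I would compute the relevant Hilbert symbol and show that $Q_b$ represents $1$ over $\Q_p$ precisely when $b$ (equivalently $a$) is a square mod $p$, i.e.\ $(\frac bp) = +1$.

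The key computation is the Hilbert-symbol/diagonalization step at $p \mid m$. Diagonalizing $Q_b = (b,a,-b)$ gives a form equivalent to $b X^2 - \frac{m}{4b} Y^2$ up to scaling (using discriminant $m$), and representing $1$ translates into a condition $(b, \cdot)_p = 1$ for an appropriate second argument; because $p \mid m$ the surviving Hilbert symbol reduces to $(\frac bp)$. The equivalence $(\frac ap) = (\frac bp)$ claimed in the statement then follows from $a^2 + 4b^2 = m \equiv 0 \bmod p$, which gives $(\frac{a^2}p) = (\frac{-4b^2}p)$ and hence $(\frac{-1}p) = +1$ (already known since $m$ is a sum of two squares), so that $a^2 \equiv -4b^2$ forces $a$ and $b$ to have the same quadratic character mod $p$.

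The main obstacle I anticipate is the careful bookkeeping at $p = 2$ and the precise normalization of the Hilbert symbol in the diagonalization, where sign and factor-of-$4$ conventions can easily introduce spurious conditions; verifying that $p = 2$ and the primes away from $m$ genuinely impose \emph{no} constraint is where one must be most careful, so that the clean criterion $(\frac bp) = +1$ for $p \mid m$ emerges as the only requirement. A cleaner alternative, which I would pursue in parallel, is to avoid local fields entirely: solvability of $bx^2 + axy - by^2 = 1$ in $\Q$ is equivalent to this genus-theoretic statement that $Q_b$ lies in the principal genus of forms of discriminant $m$, and the genus characters are exactly the quadratic residue symbols $(\frac bp)$ for $p \mid m$; this reduces the lemma to a standard fact about values of the genus characters on the class of $Q_b$.
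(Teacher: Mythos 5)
Your overall strategy coincides with the paper's: reduce $Q_b(x,y)=1$ to the norm equation $(2bx+ay)^2 - my^2 = 4b$, invoke the local--global principle, and show that the only nontrivial local conditions are the Hilbert symbols $\big(\frac{b,m}{p}\big)$ at the odd primes $p \mid m$, where they collapse to $(\frac bp) = +1$. However, your justification of the parenthetical claim $(\frac ap) = (\frac bp)$ is a non sequitur. From $a^2 \equiv -4b^2 \bmod p$ you can only deduce $(\frac{a^2}{p}) = (\frac{-4b^2}{p})$, i.e.\ the vacuous identity $1 = 1$ together with the already-known fact $(\frac{-1}p)=+1$; squaring destroys precisely the information you need, and ``$a^2 \equiv -4b^2$ forces $a$ and $b$ to have the same quadratic character'' does not follow. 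The statement is true, but requires a different identity: the paper uses $(a+2b)^2 = m + 4ab \equiv 4ab \bmod p$ (with $a+2b \not\equiv 0 \bmod p$ because $m$ is squarefree), which gives $(\frac ap)(\frac bp) = (\frac{4ab}p) = +1$ directly.

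The second gap is that you flag the place $p=2$ as an anticipated obstacle requiring ``careful bookkeeping'' but never discharge it, and this is not a step you may simply expect to work out. The clean resolution, which the paper uses and you do not mention, is the product formula $\prod_v \big(\frac{b,m}{v}\big) = 1$: once triviality is verified at $\infty$ and at every odd prime, the symbol at $2$ is forced to be trivial, so no dyadic computation is needed. Without this (or an explicit computation over $\Q_2$) your argument is incomplete at exactly the place you identified as delicate. A smaller point: at an odd prime $p \nmid m$ with $p \mid b$, the target $4b$ is not a unit, and the form $X^2 - mY^2$ represents it only because $m \equiv a^2 \bmod p$ is automatically a square there (since $p \mid b$ and $p \nmid m$ force $p \nmid a$); your phrase ``unit coefficients can be arranged'' is correct but should be backed by this observation. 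Your genus-theoretic alternative at the end is sound and matches the remark the paper makes immediately after its proof.
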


\begin{proof}
Multiplying (\ref{E2Dn}) through by $4b$ and completing
the square gives $(2bx + ay)^2 - m y^2 = 4b$. The fact that
$(\frac ap) = (\frac bp)$ follows immediately from the 
congruence $(a+2b)^2 \equiv 4ab \bmod p$.

We have to check solvability in all completions of $\Q$. Solvability 
in the reals being clear, we have to verify local solvability in $\Q_p$ 
for all primes $p$. This is equivalent to the triviality of the Hilbert 
symbol $\big(\frac{b,m}p\big) = 1$ for all primes $p$. By the product 
formula $\prod_p \big(\frac{b,m}p\big) = 1$, it is enough to check 
solvability for all primes except one. We will therefore show that 
the equation above has solutions in the completions $\Q_p$ for all 
odd primes $p$. By Hensel's Lemma it is sufficient to check solvability 
modulo $p$. Now we distinguish two cases:

\medskip\noindent
$p \nmid m$: Then $X^2 - m Y^2$ represents at least one nonsquare mod $p$,
hence all of them; thus $X^2 - m Y^2 \equiv 4b \bmod p$ is
solvable, and since $\gcd(a,2b) = 1$, we can write 
$Y = y$ and $X = 2bx + ay$.

\medskip\noindent
$p \mid m$:  Then $(2bx + ay)^2 - m y^2 \equiv  4b \bmod p$ implies 
$(\frac bp) = +1$. If conversely $(\frac bp) = +1$, we can show
solvability modulo $p$ and in $\Z_p$ exactly as above. 
\end{proof}

By Gauss's genus theory, the conditions $(\frac bp) = +1$ for all
primes $p \mid m$ is equivalent to $Q_b$ being in the principal genus.
Such forms are known to represent $1$ rationally. Solvability criteria
for the negative Pell equation following from this criterion are due
to Dirichlet, Scholz, Epstein and others.

\subsection*{Special Cases}
Assume that $m = pq$ is the product of two primes $p \equiv q \equiv 1 \bmod 4$.
We have already seen that (\ref{E1Dn}) is solvable in integers if $(\frac pq) = -1$.
Suppose therefore that $(\frac pq) = -1$, and write $p = A^2 + 4B^2$ and 
$q = C^2 + 4D^2$. Then $m = a^2 + 4b^2$ for $(a,b) = (AC-4BD,AD+BC)$ and 
$(a,b) = (AC+4BD,AD-BC)$. By Burde's reciprocity law (see \cite{LRL}) we have
$$ \Big(\frac{AC \pm 4BD}p \Big) = \Big(\frac{AC \pm 4BD}q \Big) = 
    \Big(\frac pq \Big)^{\phantom{p}}_4  \Big(\frac qp \Big)^{\phantom{p}}_4. $$

Thus if $(\frac pq)^{\phantom{p}}_4 = - (\frac qp)_4$, the second 
descendants $Q_b(x,y) = 1$ are not solvable, hence neither is the 
negative Pell equation (\ref{E1Dn}).

If  $(\frac pq)^{\phantom{p}}_4 = (\frac qp)_4 = -1$, on the other hand, the 
first descendants $px^2 - qy^2 = \pm 1$ do not have solutions. In fact from 
$px^2 - qy^2 = 1$ we get 
$(\frac qp)^{\phantom{p}}_4 = (\frac yp)(\frac{-1}p)^{\phantom{p}}_4$, 
and with $y = 2^ju$ for some $j \ge 1$ we find 
$(\frac yp) = (\frac 2p)^j (\frac pu) = (\frac 2p) 
            = (\frac{-1}p)^{\phantom{p}}_4$.
Thus the solvability of $px^2 - qy^2 = 1$ implies 
$(\frac qp)^{\phantom{p}}_4 = 1$. Similarly, $qx^2 - py^2 = 1$ 
can only be solvable if $(\frac pq)^{\phantom{p}}_4 = 1$. Thus if 
$(\frac pq)^{\phantom{p}}_4 = (\frac qp)^{\phantom{p}}_4 = -1$, 
neither of the equations $px^2 - qy^2 = \pm 1$ is solvable; but 
then one of the equations $Q_b(x,y) = 1$ must have a solution.
We have proved

\begin{cor}
Let $m = pq$ be a product of two primes $p \equiv q \equiv 1 \bmod 4$.
Then we have the following possibilities:
\begin{enumerate}
\item $(\frac pq) = -1$: then (\ref{E1Dn}) is solvable.
\item $(\frac pq) = +1$: If 
      $(\frac pq)^{\phantom{p}}_4 = - (\frac qp)^{\phantom{p}}_4$, 
      then (\ref{E1Dn}) is not solvable. \\
\phantom{$(\frac pq) = +1$:} If 
      $(\frac pq)^{\phantom{p}}_4 = (\frac qp)^{\phantom{p}}_4 = -1$, then 
      (\ref{E1Dn}) is solvable. 
\end{enumerate}   
\end{cor}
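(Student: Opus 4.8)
The plan is to treat the three assertions separately, assembling results already established. The first assertion, that $(\frac pq) = -1$ forces the negative Pell equation (\ref{E1Dn}) to be solvable, is precisely the earlier Corollary, so I would simply invoke it; no new work is required there.

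For the non-solvability assertion I would reduce to $Q_b$ via Proposition \ref{PrQan}: equation (\ref{E1Dn}) is solvable if and only if $Q_b(x,y) = 1$ has an integral solution for \emph{some} representation $m = a^2 + 4b^2$ with $a,b \in \N$. Writing $p = A^2 + 4B^2$ and $q = C^2 + 4D^2$, the integer $m = pq$ has exactly two such representations, namely $(a,b) = (AC - 4BD, AD + BC)$ and $(a,b) = (AC + 4BD, AD - BC)$, arising from the two essentially different factorizations $m = \mu\bmu$ over $\Z[i]$. For each of these the Lemma shows that $Q_b(x,y) = 1$ is already insolvable over $\Q$ as soon as $(\frac ap) = -1$, and the displayed Burde formula gives $(\frac ap) = (\frac pq)_4 (\frac qp)_4$ for \emph{both} values of $a$. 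Under the hypothesis $(\frac pq)_4 = -(\frac qp)_4$ this common value is $-1$, so neither $Q_b(x,y)=1$ is even rationally solvable; since both representations fail, Proposition \ref{PrQan} yields that (\ref{E1Dn}) is insolvable.

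For the solvability assertion, when $(\frac pq)_4 = (\frac qp)_4 = -1$, the rational criterion no longer suffices: here Burde gives $(\frac ap) = +1$, so both forms $Q_b$ lie in the principal genus and represent $1$ rationally, but this need not upgrade to an integral representation. I would therefore argue through the first descent instead. The key computation is to exclude the mixed first descendants $px^2 - qy^2 = \pm 1$: from $px^2 - qy^2 = 1$ one extracts $(\frac qp)_4 = (\frac yp)(\frac{-1}p)_4$, and writing $y = 2^j u$ one evaluates $(\frac yp) = (\frac 2p) = (\frac{-1}p)_4$, forcing $(\frac qp)_4 = 1$; symmetrically $qx^2 - py^2 = 1$ forces $(\frac pq)_4 = 1$. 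Under the present hypothesis both fourth-power symbols equal $-1$, so neither mixed descendant is solvable. I would then invoke the first Proposition: exactly four of the $2^{t+1} = 8$ descendants with $cd = m$ are solvable. Two of these are always the positive-Pell descendants; once the mixed descendants are excluded, the remaining two solvable descendants must be the negative-Pell ones, whence (\ref{E1Dn}) is solvable.

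The main obstacle is the bookkeeping in this last step. One must verify that the eight descendants group into four conjugate pairs — positive-Pell, negative-Pell, and the two mixed types $px^2 - qy^2$ and $qx^2 - py^2$ — each of which is solvable all-or-nothing, so that subtracting the always-solvable positive-Pell pair from Dirichlet's count of four forces exactly one remaining pair to be solvable; excluding both mixed pairs then pins this down to the negative-Pell pair. The delicate but routine supporting computation is the fourth-power residue manipulation, where care with the identity $(\frac 2p) = (\frac{-1}p)_4$ (valid since $p \equiv 1 \bmod 4$) and with applying Burde's law to both representations of $m$ is essential.
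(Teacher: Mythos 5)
Your proposal is correct and follows essentially the same route as the paper: case (1) by the earlier corollary, the non-solvability case by combining Proposition \ref{PrQan} with the rational-solvability lemma and Burde's law applied to both representations $m=a^2+4b^2$, and the solvability case by excluding the mixed first descendants via the computation $(\frac yp)=(\frac 2p)=(\frac{-1}p)_4$ and then appealing to Dirichlet's count of exactly four solvable descendants. The only difference is that you make explicit the pairing of the eight descendants into four conjugate classes, which the paper leaves implicit in the statement of its first proposition.
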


These results go back to Dirichlet and Scholz. It is not very hard to
produce numerous similar results for products of three and more primes.

\subsection*{Classes represented by the forms $Q_b$}

The propositions above show that if the negative Pell equation 
$x^2 - my^2 = -1$ is solvable, then one of the forms $Q_b$ 
(where $b$ runs through all integers with $m = a^2 + 4b^2$) 
represents $1$, hence lies in the principal class of the 
primitive binary quadratic forms with discriminant $m$. Our 
next result tells us exactly how the remaining forms $Q_b$ are 
distributed among the equivalence classes of order dividing $2$, 
and that the form $Q_b$ representing $1$ is unique: 

\begin{thm}\label{Tquf}
Let $m = p_1 \cdots p_t$ be a product of primes $p_j \equiv 1 \bmod 4$,
and let $m = a_j^2 + 4b_j^2$ ($1 \le j \le t$, $a_j, b_j > 0$) be the 
different representations of $m$ as a sum of two squares. Then the
binary quadratic forms $Q_j = (b_j,a_j,-b_j)$ have discriminant $m$,
and there are two cases:
\begin{enumerate}
\item The negative Pell equation is solvable: then the forms $Q_b$
      represent the $2^{t-1}$ classes of order dividing $2$ in $\Cl(m)$,
      the class group of binary quadratic forms with discriminant $m$.
      In particular, the form representing $1$ is unique.
\item The negative Pell equation is not solvable: then there exists a 
      subgroup $C$ of
      index $2$ in $\Cl(m)[2]$ such that each form $Q_b$ represents one class
      in $\Cl(m)[2] \setminus C$. Each such class is represented
      by exactly two forms.
\end{enumerate}
In the first case, the pair $(a,b)$ for which $Q_b = (b,a,-b)$ represents $1$
is unique.
\end{thm}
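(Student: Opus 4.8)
Everything preceding the two cases is routine. One has $\disc(b_j,a_j,-b_j)=a_j^2+4b_j^2=m$, and since $m$ is odd, squarefree, with exactly $t$ prime divisors $p\equiv 1\bmod 4$, it admits exactly $2^{t-1}$ representations $m=a^2+(2b)^2$ with $a,b>0$ (the odd summand being $a$); these give the forms $Q_b$. By Gauss's genus theory the principal genus has index $2^{t-1}$, so $|\Cl(m)/\Cl(m)^2|=2^{t-1}$ and hence $|\Cl(m)[2]|=2^{t-1}$ as well. Thus the two sets to be matched have the same size, and the whole statement becomes an assertion about the fibres of the map $(a,b)\mapsto[Q_b]$. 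Throughout I read $\Cl(m)$ as the ordinary class group of $\cO=\Z[\tfrac{1+\sqrt m}2]$, into which proper equivalence classes of forms project; it is there, and not in the narrow class group, that $Q_b$ will be $2$-torsion.

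The first real step is to show $[Q_b]\in\Cl(m)[2]$. Attach to $Q_b=(b,a,-b)$ the ideal $\fa=[\,b,\beta\,]$ with $\beta=\tfrac{-a+\sqrt m}2$; note $\gcd(b,m)=1$ (else $p^2\mid m$) and $\gcd(a,b)=1$. From $m=a^2+4b^2$ one reads off $\beta^2=-a\beta+b^2$ and $N\beta=\tfrac{a^2-m}4=-b^2$. Because $\gcd(a,b)=1$, the relation $\beta^2=-a\beta+b^2$ forces $\beta\in\fa^2=[\,b^2,b\beta,\beta^2\,]$, and comparing norms ($N\fa^2=b^2=|N\beta|$) gives $(\beta)=\fa^2$. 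Thus $\fa^2$ is principal and $[Q_b]^2=1$.

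The dichotomy is governed by the passage to the narrow class group $\Cl^+(m)$. The same computation yields there the uniform relation $[\fa]^2=\delta$, where $\delta$ is the narrow class of the principal ideal $(\beta)$. As $N\beta<0$, this ideal has no totally positive generator unless $\cO$ carries a unit of norm $-1$; hence $\delta=1$ exactly when the negative Pell equation is solvable, and otherwise $\delta$ is the unique nontrivial element of $\ker(\Cl^+(m)\to\Cl(m))$, in either case independent of the representation. In Case~$1$ we have $\delta=1$, so the $2^{t-1}$ classes $[\fa]$ lie in $\Cl^+(m)[2]$, a group of order $2^{t-1}$; once the map is known to be injective it is a bijection, and the unique form hitting the principal class is the one representing $1$, which is the final assertion. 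In Case~$2$, $\delta$ has order $2$ and each $[\fa]$ is a square root of $\delta$; granting injectivity, the $2^{t-1}$ images fill the coset of $\Cl^+(m)[2]$ formed by these square roots, and projecting along $\Cl^+(m)\to\Cl(m)=\Cl^+(m)/\la\delta\ra$ collapses it two-to-one onto the nontrivial coset of the index-$2$ subgroup $C$ equal to the image of $\Cl^+(m)[2]$, which is precisely the content of Case~$2$.

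What remains, and what I expect to be the main obstacle, is the injectivity hidden in the word \emph{bijection} of Case~$1$ (equivalently, that distinct representations yield distinct narrow classes, whence the two-to-one statement of Case~$2$). Genus characters alone will not close this gap: $[Q_b]$ is determined only up to genus by the symbols $\big(\tfrac b{p_1}\big),\dots,\big(\tfrac b{p_t}\big)$, and a single genus can contain more than one ambiguous class once the $4$-rank is positive. The plan is to parametrize the representations by Gaussian factorizations $a+2bi=\prod_j\pi_j^{e_j}$ with $e_j\in\{\pm1\}$ (modulo complex conjugation) and the ambiguous classes by the ramified primes $\fp_j$ of $\cO$, and to show that the resulting $\F_2$-linear comparison map is controlled by the R\'edei matrix $\big(\tfrac{p_i}{p_j}\big)$. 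I expect its nondegeneracy in Case~$1$ to follow from R\'edei reciprocity together with the product formula for the Hilbert symbols $\big(\tfrac{b,m}p\big)$ already used in the Lemma; pinning this down without a circular appeal to the class number is the delicate point.
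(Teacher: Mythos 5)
You have correctly done everything except the part that \emph{is} the theorem. Your setup --- the discriminant count, the ideal $\fa=[b,\beta]$ with $(\beta)=\fa^2$, the observation that the whole dichotomy is governed by the narrow class $\delta$ of principal ideals with negative-norm generators, and the bookkeeping that derives both Case 1 and Case 2 from injectivity of $(a_j,b_j)\mapsto[\fa_j]$ --- is sound, and it is in fact the same translation the paper itself performs: the paper attaches to each representation the ideal $\fa_j=(2b_j+\sqrt{m},a_j)$ with $N\fa_j=a_j$ and $\fa_j^2=(2b_j+\sqrt{m}\,)$, and restates Theorem \ref{Tquf} as the ideal-theoretic Theorem \ref{Th1}. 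Your insistence on reading $\Cl(m)$ as the ordinary rather than the narrow class group is also correct and, in fact, necessary: when $N\eps=+1$ the classes $[Q_b]$ have order $4$ in the narrow group (they square to $\delta$), so Case 2 would be false there. One small repair: $|\Cl(m)[2]|=2^{t-1}$ for the wide group does not follow from genus theory alone; you need $\delta$ to be a square in the narrow group, which your own relation $[\fa_j]^2=\delta$ supplies.

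The gap you acknowledge at the end, however, is not a residual technicality but the entire content of the statement. Concretely, what is missing is: for $i\ne j$, the element $(2b_i+\sqrt{m}\,)(2b_j+\sqrt{m}\,)$ is never a unit times a square in $\Q(\sqrt{m}\,)^\times$, equivalently $\fa_i\fa_j$ is never narrowly principal. This assertion contains the Hardy--Williams uniqueness of the pair $(a,b)$ for which $Q_b$ represents $1$, which the paper pointedly credits as a theorem in its own right; and, as you observe yourself, genus characters cannot prove it --- for $m=221$ the two forms $(7,5,-7)$ and $(5,11,-5)$ lie in the same genus --- so nothing short of R\'edei-type $4$-rank arguments will close it. Your final paragraph is a plan for such an argument, not a proof, and there is no reason to expect the plan to be routine to execute. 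For comparison, the paper does not close this gap internally either: it proves nothing beyond the translation and instead cites \cite{Lqu} for the proof of Theorem \ref{Th1}. So your proposal establishes the easy half (which the paper records in a few lines) and leaves open exactly the statement for which the paper invokes an external reference; as a self-contained proof it is incomplete.
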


Under the standard correspondence between classes of binary quadratic
forms and ideal classes, the class of the form $Q_b = (b,a,-b)$ with 
discriminant $a^2 + 4b^2 = m$ corresponds to the ideal class
represented by the ideal $(\frac{a+\sqrt{m}}2, b)$.

If $m = p_1 \cdots p_t$ is the product of $t$ prime factors
$p_j \equiv 1 \bmod 4$, then $m = a_j^2 + 4b_j^2$ can be written in 
$2^{t-1}$ ways as a sum of two squares of positive integers. To each such 
sum we attach an ideal
$$ \fa_j = (2b_j + \sqrt{m}, a_j), $$ 
which has the properties
$$ N\fa_j = a_j, \qquad \fa_j^2 = (2b+\sqrt{m}\,). $$

In \cite{Lqu} we have proved the following theorem, which is the ideal
theoretic version of Thm. \ref{Tquf}:

\begin{thm}\label{Th1}
Let $K = \Q(\sqrt{m}\,)$ be a quadratic number field, where
$m = p_1 \cdots p_t$ is a product of primes $p_j \equiv 1 \bmod 4$. 
Let $\eps$ denote the fundamental unit of $K$.
\begin{itemize}
\item If $N \eps = -1$, then the ideal classes $[\fa_j]$ are pairwise distinct and
      represent the $2^{t-1}$ classes of order dividing $2$ in $\Cl(K)$. Each ideal
      $\fa_j$ is equivalent to a unique ramified ideal $\fb_e$. In particular, exactly
      one of the $\fa_j$ is principal; if $\fa_j = (\alpha)$, then 
      $$ \eta = \frac{2b_j + \sqrt{m}}{\alpha^2} $$
      is a unit with norm $-1$ (equal to $\eps$ if $\alpha$ is chosen suitably). 
\item If $N \eps = + 1$, then there is a subgroup $C$ with index $2$ in the 
      group $\Cl(K)[2]$ of ideal classes of order dividing $2$ such that each 
      class in $C$ is represented by two ramified ideals $\fb_e$ (thus $C$ is 
      the group of strongly ambiguous ideal classes in $K$). Each class in 
      $\Cl(K)[2] \setminus C$ is represented by two ideals $\fa_j$.
\end{itemize}
\end{thm}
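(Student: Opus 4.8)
The plan is to treat Theorem \ref{Th1} as a genus-theoretic statement about the $2$-torsion of the wide ideal class group $\Cl(K)$, running in parallel to the form-theoretic Theorem \ref{Tquf} and connected to it through the dictionary between primitive forms of discriminant $m$ and the narrow class group $\Cl^+(K)$ (\cite{Cox}, \cite{Fla}), under which $Q_b = (b,a,-b)$ corresponds to $(\frac{a+\sqrt m}2, b)$. The organising observation is that $N(2b_j + \sqrt m) = 4b_j^2 - m = -a_j^2 < 0$: the defining relation $\fa_j^2 = (2b_j+\sqrt m)$ thus exhibits $\fa_j$ as a square root of a \emph{wide}-principal ideal whose generator is not totally positive. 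Consequently $[\fa_j]$ always lies in $\Cl(K)[2]$, whereas in $\Cl^+(K)$ the square $[\fa_j]^2$ equals the class $z$ of $(\sqrt m) = \fp_1\cdots\fp_t$, which has order $1$ if $N\eps = -1$ and order $2$ if $N\eps = +1$. This single sign is what splits the statement, since the projection $\pi\colon \Cl^+(K) \to \Cl(K)$ is an isomorphism exactly when $N\eps = -1$ and otherwise has kernel $\la z\ra$ of order $2$. Since $|\Cl(K)[2]| = 2^{t-1}$ by genus theory and there are precisely $2^{t-1}$ representations $m = a_j^2 + 4b_j^2$ with $a_j,b_j>0$, the map $j \mapsto [\fa_j]$ runs between sets whose cardinalities are already arranged for the asserted bijection or two-to-one behaviour.

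Assume first $N\eps = -1$. Then $\pi$ is an isomorphism, the group of strongly ambiguous classes generated by the ramified primes $\fp_1, \ldots, \fp_t$ (subject only to $\fp_1\cdots\fp_t = (\sqrt m)$, now principal) has order $2^{t-1}$ and fills all of $\Cl(K)[2]$. The distinctness of the $[\fa_j]$ follows as in Theorem \ref{Tquf}(1), since the genus characters of $[\fa_j]$ recover the representation $(a_j,b_j)$; together with the count this gives that the $2^{t-1}$ classes $[\fa_j]$ exhaust $\Cl(K)[2]$. Matching each $[\fa_j]$ against the strongly ambiguous classes produces the ramified ideal $\fb_e$ equivalent to it, unique once a set of representatives is fixed. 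For the principal $\fa_j = (\alpha)$, the relation $\fa_j^2 = (2b_j+\sqrt m)$ gives $(\alpha^2) = (2b_j + \sqrt m)$, so $\eta := (2b_j + \sqrt m)/\alpha^2$ is a unit; since $|N\alpha| = N\fa_j = a_j$ we get $N\eta = -a_j^2/(N\alpha)^2 = -1$, the promised unit of norm $-1$ (normalisable to $\eps$).

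Now assume $N\eps = +1$, where $z \neq 1$ generates $\ker\pi$. Here I would take $C$ to be the subgroup of strongly ambiguous classes, i.e.\ the image in $\Cl(K)$ of $\la \fp_1, \ldots, \fp_t\ra$. The relation $\fp_1\cdots\fp_t = (\sqrt m)$ is now wide-principal but \emph{not} narrowly principal, precisely because $N\eps = +1$; this collapses the $2^{t-1}$ narrowly distinct ramified classes in pairs, so that $|C| = 2^{t-2}$ and each class of $C$ is represented by two ramified ideals $\fb_e$, and since $|\Cl(K)[2]| = 2^{t-1}$ the subgroup $C$ has index $2$. The classes $[\fa_j]$ land in $\Cl(K)[2]\setminus C$: each is $\pi$ of a narrow class of order $4$ (its narrow square being $z$), and such a class cannot be the image of a strongly ambiguous class, whose narrow lifts have order at most $2$. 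Transporting the two-to-one partition of Theorem \ref{Tquf}(2) through $\pi$—whose kernel $\la z\ra$ glues the $\fa_j$ in pairs—then shows each class of $\Cl(K)[2]\setminus C$ is hit by exactly two of the $\fa_j$, the bookkeeping closing because $2^{t-1} = 2\cdot 2^{t-2}$.

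The routine parts are the dictionary, the norm computation for $\eta$, and the count of representations $m = a_j^2 + 4b_j^2$. The real obstacle is concentrated in the case $N\eps = +1$: one must establish that the wide and narrow $2$-ranks coincide, i.e.\ $|\Cl(K)[2]| = 2^{t-1}$ (equivalently that $z$ is a square in $\Cl^+(K)$), and then prove the precise two-to-one behaviour simultaneously on the ramified ideals $\fb_e$ (giving $C$ of order $2^{t-2}$) and on the $\fa_j$ (filling $\Cl(K)[2]\setminus C$). Both rest on the ambiguous class number formula together with a careful determination of which products of the ramified primes $\fp_e$ become principal in the wide as opposed to the narrow sense; everything else is the faithful transcription of Theorem \ref{Tquf} into the language of ideals.
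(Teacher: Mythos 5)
First, note that the paper itself offers no proof of Theorem~\ref{Th1}: it is quoted from \cite{Lqu} as the ideal-theoretic version of Theorem~\ref{Tquf}, so your argument has to stand on its own rather than be compared with an internal proof. Your overall architecture is sound and, as far as one can tell, consistent with how such a proof must go: the sign of $N(2b_j+\sqrt m)=-a_j^2$ makes $[\fa_j]^2$ equal, in the narrow group $\Cl^+(K)$, to the class $z$ of principal ideals with negative-norm generator; the dichotomy $N\eps=\pm1$ is exactly whether $z$ is trivial; the norm computation for $\eta$ is correct; and the observation that for $N\eps=+1$ both narrow lifts of $[\fa_j]$ have order $4$, hence $[\fa_j]\notin C$, is a genuinely clean way to get the second bullet's placement of the $[\fa_j]$ outside the strongly ambiguous classes.

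The genuine gap is the pairwise distinctness of the $2^{t-1}$ classes $[\fa_j]$ (equivalently, the exact two-to-one behaviour in the case $N\eps=+1$), which is the entire content of the theorem once the counting is in place. You justify it by saying ``the genus characters of $[\fa_j]$ recover the representation $(a_j,b_j)$,'' but genus characters are constant on cosets of $\Cl^2$ and therefore cannot separate two $2$-torsion classes whenever the $4$-rank is positive. The paper's own Example~3 refutes your mechanism concretely: for $m=4777=17\cdot281=59^2+4\cdot18^2=69^2+4\cdot2^2$ one has $(\frac{59}{17})=(\frac{8}{17})=+1$ and $(\frac{69}{17})=(\frac{1}{17})=+1$, so the two classes have identical genus characters, yet one (for $(a,b)=(59,18)$) is principal and the other (the class of $(2,69,-2)$) has order $2$ and lies in the principal genus without being principal. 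Your fallback, ``as in Theorem~\ref{Tquf}(1)'' and ``transporting the two-to-one partition of Theorem~\ref{Tquf}(2),'' is circular: Theorem~\ref{Tquf} is the same statement in the language of forms, and it too is only cited to \cite{Lqu}. What is actually needed is an argument that $[\fa_j]^+=[\fa_k]^+$ forces $j=k$, i.e.\ that $(2b_j+\sqrt m)(2b_k-\sqrt m)$ cannot be a unit times a square in $K$ unless $j=k$; this requires an arithmetic input (e.g.\ working with the prime splitting in $\Z[i]$ or with Gauss/Dirichlet composition of the forms $Q_{b_j}$) that your proposal identifies as ``the real obstacle'' but does not supply, and the ambiguous class number formula alone will not produce it.
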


The unique pair $(a,b)$ for which $Q_b(x,y) = 1$ is solvable in integers can be
recovered as follows: the extension $\Q(\sqrt{\pm \eps \sqrt{m}\,}\,)$, with
$\eps > 1$ the fundamental unit of $\Q(\sqrt{m}\,)$ and the sign chosen as 
$(-1)^{(m-1)/4} = (\frac 2m)$, is a cyclic quartic extension with 
discriminant $m^3$, hence equal to one of the extensions 
$\Q(\sqrt{m + 2b\sqrt{m}\,}\,)$ for $m = a^2 + 4b^2$, which we 
have studied in \cite{Lqu}.

\subsection*{Remarks}
First traces of Prop. \ref{PrQan} can be found in Euler's work.
In \cite[Prob. 2]{Euler} he proved that if $ap^2 - 1 = q^2$, then
there exist integers $b, c, f, g$ such that 
$$ a = f^2 + g^2, \quad p^2 = b^2 + c^2, \quad
   q = bf+cg \quad \text{and} \quad \pm 1 = bg-cf, $$
and we have $ax^2 + 1 = y^2$ for $x = 2pq$ and $y = 2q^2+1$.

Euler's observation is related to our results as follows: since 
$(b,c,p)$ is a Py\-tha\-go\-rean triple, there exist integers (switch 
$b$ and $c$ if necessary to make $c$ even) $m, n$ such that 
$p = m^2 + n^2$, $b = m^2 - n^2$ and $c = 2mn$; then
$\pm 1 = bg - cf = gm^2 - 2mnf - gn^2$, i.e. $\pm 1$ is represented 
by the binary quadratic form $(g,2f,-g)$.

Euler's results were rediscovered and complemented by Hart \cite{Hart},
A.~G\'erardin \cite{Ger}, Sansone \cite{San,San2}, Epstein \cite{Epst},  
K.~Hardy \& K.~Williams \cite{HaWi} (they first proved the uniqueness 
of the pair $(a,b)$), Arteha \cite{Art}. The connection with Pythagorean 
triples was also observed by Grytczuk, Luca, \& Wojtowicz \cite{GLW}.
 
Generalizations to other Pell equations are due to Sylvester \cite{Syl},
G\"unther \cite{Gu82}, and Bapoungu\'e \cite{Bap,Bap1,Bap2,Bap3,Bap4}.
For details, see \cite[Part II]{LPd}.

\section{Hilbert Class Fields}

Every number field $K$ has a maximal unramified abelian 
extension $L/K$; the field $L$ is called the Hilbert class 
field of $K$. Class field theory predicts that the Galois 
group $\Gal(L/K)$ of this extension is isomorphic to the 
class group $\Cl(K)$. The maximal $2$-extension contained 
in $L/K$ is called the Hilbert $2$-class field, and is 
isomorphic to the $2$-class group $\Cl_2(K)$, the $2$-Sylow 
subgroup of $\Cl(K)$.

In this section we will present a very simple constructing of
unramified cyclic octic extensions of complex quadratic number 
fields. Such constructions are known (see e.g. \cite{CC,Cooke,Kap}, as well
as the classical presentations \cite{Red32,RPell,RR}) for discriminants 
$m = -4p$ for primes $p \equiv 1 \bmod 8$, but even in this case we 
have managed to drastically simplify the construction.

Let $m \equiv 1 \bmod 4$ be a sum of two squares, say $m = a^2 + 4b^2$. 
Let $k = \Q(\sqrt{-m}\,)$ denote the quadratic number field with 
discriminant $\Delta = -4m$. The quadratic extension $k_2 = \Q(i,\sqrt{m}\,)$ 
is unramified and abelian over $\Q$, and is a subfield of the genus 
class field of $k$, which is given by 
$k_\gen = \Q(i,\sqrt{q_1},\ldots, \sqrt{q_t}\,)$, where
$m = q_1 \cdots q_t$ is the prime factorization of $m$.

The quadratic unramified extensions of $k$ have the form 
$K = \Q(\sqrt{c}, \sqrt{-d}\,)$, where $m = cd$ is a 
factorization of $m$ with $c \equiv -d \equiv 1 \bmod 4$.
These factorizations already occurred in (\ref{Edesc1}), where
the factorizations $m = cd$ with $c \equiv 3 \bmod 4$
lead to equations that do not even have solutions modulo $4$.

Unramified cyclic quartic extensions of $k$ containing $k(\sqrt{d_1}\,)$
correspond bijectively to factorizations $\Delta = d_1d_2$ into two 
discriminants $d_1, d_2$ with $(d_1/p_2) = (d_2/p_1) = +1$ for all 
primes $p_1 \mid d_1$ and $p_2 \mid d_2$. Such factorizations are 
called $C_4$-de\-com\-po\-si\-tions of $\Delta = -4m$. In this article 
we deal with unramified cyclic quartic extensions containing 
$k_2 = \Q(i,\sqrt{m}\,)$; such extensions exist if and only if 
$\Delta = -4 \cdot m$ is a $C_4$-decomposition, that is, if and 
only if the primes dividing $m$ are all $\equiv 1 \bmod 8$.

The ambiguous prime ideals $\ne (\sqrt{m}\,)$ all generate ideal 
classes of order $2$; thus their inertia degree in the full 
Hilbert $2$-class field is equal to $2$. This means that if 
$K/k$ is an unramified abelian $2$-extension, then $K$ is the 
full Hilbert $2$-class field if and only if all primes dividing 
$\Delta$ have inertia degree $2$ in $K/k$.

\begin{figure}[ht!]
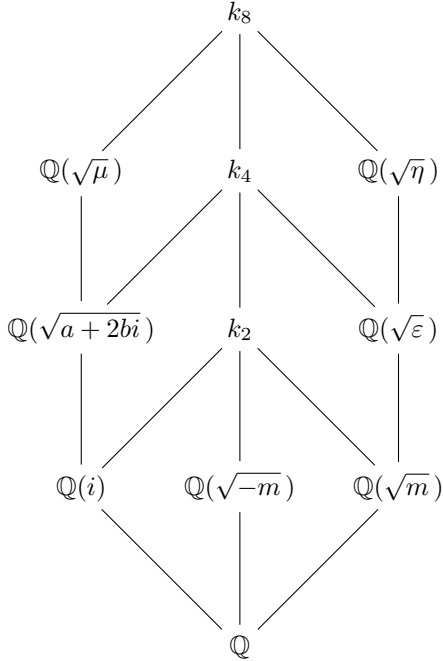

\begin{diagram}
    &         &    k_8   &   & \\          
    & \ruLine &   \uLine         & \luLine &     \\
  \Q(\sqrt{\mu}\,)   &  &  k_4  &  &  \Q(\sqrt{\eta}\,) \\
   \uLine & \ruLine &   \uLine         & \luLine &  \uLine     \\
\Q(\sqrt{a+2bi}\,) &   &  k_2  &  &  \Q(\sqrt{\eps}\,) \\
   \uLine & \ruLine &   \uLine         & \luLine &  \uLine     \\
   \Q(i)   &         &  \Q(\sqrt{-m}\,) &         & \Q(\sqrt{m}\,) \\
        & \luLine &    \uLine        & \ruLine &  \\ 
        &         &      \Q          &         &  
\end{diagram}
\caption{A piece of the $2$-class field tower of $\Q(\sqrt{-m}\,)$.}
\end{figure}

The construction of a cyclic unramified extension $k_4/k$ containing
$k(i) = k(\sqrt{m}\,)$ is classical (see \cite{Herz}): we have to 
solve the equation
\begin{equation}\label{Ekum4}
     A^2 + B^2 - mC^2 = 0.     
\end{equation}
This equation has solutions with $C = 1$, namely $A = a$ and $B = 2b$,
where $m = a^2 + 4b^2$. The extension $k(\sqrt{a+2bi}\,)/k$ is a 
cyclic quartic extension unramified outside of $2$; this extension
is unramified above $2$ (and thus unramified everywhere) if and only 
if $b$ is even, which in turn is equivalent to $m \equiv 1 \bmod 8$.

The Galois group of $k_4/k$ is generated by the element $\sigma$,
whose action on the generators of the extension $k_4/k$ is given 
by the following table:
 $$ \begin{array}{l|rrc}
 \rsp    \alpha         &  i & \sqrt{m}   & \sqrt{a+2bi}          \\ \hline
 \rsp    \sigma(\alpha) & -i & - \sqrt{m} & \sqrt{m}/\sqrt{a+2bi} \\ 
 \rsp  \sigma^2(\alpha) &  i & \sqrt{m}   & -\sqrt{a+2bi}   
     \end{array} $$

We now will construct cyclic octic extensions $k_8/k$ containing $k_4$.
It follows from elementary Galois theory (or see \cite{RPell}) that,  
to this end, we have to solve the diophantine equation 
\begin{equation}\label{EKum8}
   \alpha^2 - (a+2bi) \beta^2 - (a-2bi) \gamma^2 = 0. 
\end{equation}
A solution $(\alpha,\beta,\gamma)$ of this equation is called primitive if 
$\gcd(\alpha,\beta) = 1$.

\begin{thm}\label{TConk8}
Let $m = a^2 + 4b^2$ be a sum of two squares. Then $\sqrt{a+2bi}$
generates a cyclic quartic extension $k_4$ of $k = \Q(\sqrt{-m}\,)$, 
which is unramified if and only if $b$ is even.

Assume that $b$ is even. Then the extension $k_4/k$ can be embedded 
in a cyclic octic extension if and only if Eqn. (\ref{EKum8}) is 
solvable. Let $(\alpha,\beta,\gamma)$ be a primitive solution of 
(\ref{EKum8}); then the extension 
$k_8 = k(\sqrt{\alpha + \beta\sqrt{a+2bi}}\,)$ is a cyclic octic 
extension of $k = \Q(\sqrt{-m}\,)$ unramified outside of $2$ and
containing $k_4$. The extension $k_8/k$ is unramified everywhere 
if and only if $a+2b \equiv \pm 1 \bmod 8$.
\end{thm}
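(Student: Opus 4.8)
The plan is to establish the four assertions of the theorem in sequence, reusing the Galois data for $k_4/k$ tabulated just above the statement.

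\emph{The quartic extension.} Writing $\theta=\sqrt{a+2bi}$ and $\pi=a+2bi$, the solution $(A,B,C)=(a,2b,1)$ of (\ref{Ekum4}) exhibits $k_4=k(i,\theta)=k_2(\theta)$ as a quadratic extension of $k_2=k(i)$; the table for $\sigma$ gives $\sigma^2(\theta)=-\theta\neq\theta$, so $\sigma$ has order $4$ and $k_4/k$ is cyclic quartic with unique quadratic subfield $k_2$. For ramification one computes the relative different of $k_4/k_2$: away from $2$ it divides $(a+2bi)$, whose prime divisors already ramify in $k/\Q$, so $k_4/k$ is unramified outside $2$; at $2$ one checks whether $a+2bi$ is a square in the completion, and this is governed by $b\bmod 2$, giving unramified above $2$ exactly when $b$ is even. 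This is the content of \cite{Herz} recalled before the statement.

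\emph{Reduction of cyclicity.} I claim that, with $k_8=k_4(\sqrt{\mu})$, the extension $k_8/k$ is cyclic of degree $8$ as soon as (a) $k_8/k_2$ is cyclic of degree $4$, and (b) $k_8/k$ is Galois. Indeed, under (b) the order-$4$ generator $\sigma$ of $\Gal(k_4/k)$ lifts to some $\tilde\sigma\in\Gal(k_8/k)$; since $\sigma^2$ fixes $k_2$, the square $\tilde\sigma^2$ lies in $\Gal(k_8/k_2)$ and restricts on $k_4$ to the nontrivial element of $\Gal(k_4/k_2)$, hence equals an \emph{odd} power of the degree-$4$ generator of $\Gal(k_8/k_2)$ furnished by (a); thus $\tilde\sigma^2$ has order $4$, so $\tilde\sigma$ has order $8$ and $\Gal(k_8/k)=\langle\tilde\sigma\rangle$ is cyclic. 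The point to keep in mind is that $i=\zeta_4\notin k$ and is inverted by $\Gal(k_2/k)$; consequently naive Kummer theory over $k_2$ produces \emph{dihedral} rather than cyclic octic extensions, which is exactly why a genuine norm equation such as (\ref{EKum8}) must enter.

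\emph{Matching the equation.} For a primitive solution $(\alpha,\beta,\gamma)$ put $\mu=\alpha+\beta\theta$ and $\overline\theta=\sqrt m/\theta$, so that $\sigma(\theta)=\overline\theta$ and $\overline\theta{}^2=a-2bi$. Equation (\ref{EKum8}) reads $N_{k_4/k_2}(\mu)=\alpha^2-\pi\beta^2=(a-2bi)\gamma^2=(\overline\theta\gamma)^2$, whence $\eta:=\overline\theta\gamma/\mu$ satisfies $\eta^2=\sigma^2(\mu)/\mu$, and since $N_{k_4/k_2}(\overline\theta)=-(a-2bi)$ one computes $N_{k_4/k_2}(\eta)=-1$; this yields (a). For (b) I must show $\sigma(\mu)/\mu\in(k_4^\times)^2$: clearing denominators and using both (\ref{EKum8}) and its complex conjugate, the norm $N_{k_4/k_2}(\sigma(\mu)/\mu)$ collapses to a perfect square in $k_2^\times$, so by Hilbert~$90$ for $k_4/k_2$ the element $\sigma(\mu)/\mu$ lies in $k_2^\times(k_4^\times)^2$; the residual class in $k_2^\times/(k_2^\times)^2$ is then shown to be a square, or $\pi$ times a square (hence a square in $k_4$ as $\pi=\theta^2$), using the equation once more together with $\gcd(\alpha,\beta)=1$. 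The converse direction recovers a primitive solution from any cyclic octic $k_8\supset k_4$ by reversing these identifications. The vanishing of this residual class, i.e. the descent of the Kummer generator from $k_2$ to $k$, is the main algebraic obstacle.

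\emph{Ramification of $k_8$.} Unramifiedness outside $2$ follows by factoring the ideal $(\mu)$ in $k_4$: from $N_{k_4/k_2}(\mu)=(a-2bi)\gamma^2$ and primitivity, every odd prime dividing $\mu$ either occurs to even order or already ramifies in $k_4/k$, so the relative discriminant of $k_8/k_4$ is supported above $2$. The remaining assertion is local at $2$: analysing $\sqrt{\mu}$ in the completions of $k_4$ above $2$ and tracking the $(1+i)$-adic valuation of $\alpha+\beta\theta$, one finds that $k_8/k$ is unramified at $2$ precisely when $a+2b\equiv\pm1\bmod 8$. I expect this $2$-adic computation, together with the residual-class argument of the previous step, to be the hardest part of the proof.
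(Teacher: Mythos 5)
Your group-theoretic framing of cyclicity is valid and genuinely different from the paper's: you split the problem into (a) $k_8/k_2$ cyclic of degree $4$, which you settle cleanly via the element $\eta=\ov{\theta}\gamma/\mu$ of relative norm $-1$, plus (b) normality of $k_8/k$, and then lift a generator of $\Gal(k_4/k)$; the paper instead applies the cocycle criterion $\alpha_\sigma^\nu=-1$ with $\nu=(1+\sigma)(1+\sigma^2)$ directly to the order-$4$ generator $\sigma$. Your treatment of unramifiedness outside $2$ coincides with the paper's (the identity $2(\alpha+\beta\sqrt{\mu})(\alpha+\gamma\sqrt{\mu'})=(\alpha+\beta\sqrt{\mu}+\gamma\sqrt{\mu'})^2$ together with primitivity).

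However, there are two genuine gaps, and they sit exactly where the paper does its real work. First, your step (b) --- showing $\sigma(\mu)/\mu\in(k_4^\times)^2$ --- is only gestured at (``the residual class is then shown to be a square, or $\pi$ times a square''); this is not a routine Hilbert~90 manipulation but the computational heart of the construction, and it cannot be done for an arbitrary primitive solution: since replacing $(\alpha,\beta,\gamma)$ by a unit multiple changes $\mu$ by a nonsquare unit of $k_2$, one must first normalize the solution. The paper does this via Lemma \ref{Lequ} and Cor.~\ref{Corx} (one may take $\alpha=2x(1+i)$, $\beta=r+si$, $\gamma=i\bbeta$, so that $\alpha=i\ov{\alpha}$ and $\gamma=i\ov{\beta}$) and then exhibits the explicit square root
$$ \mu^{\sigma+1}=\mu_\sigma^2,\qquad
   \mu_\sigma=\frac{\alpha+\beta\sqrt{a+2bi}+i\ov{\beta}\sqrt{a-2bi}}{1+i}, $$
from which both normality and the norm computation $\mu_\sigma^\nu=-\mu^\nu$ follow. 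Without some such explicit element (or an equivalent class-field-theoretic substitute) normality of $k_8/k$ is unproven. Second, the criterion $a+2b\equiv\pm1\bmod 8$ for unramifiedness above $2$ is asserted but not derived; the paper obtains it from explicit congruences modulo $4$ for $\bigl(\frac{i+\sqrt{a+2bi}}{1+i}\bigr)^2$ and $\bigl(\frac{2+i+\sqrt{a+2bi}}{1+i}\bigr)^2$ combined with the normal form of $(\alpha,\beta)$ modulo $4$ from Cor.~\ref{Corx}. As it stands, your proposal is an outline whose two hardest steps are acknowledged but not carried out.
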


The proof of Thm. \ref{TConk8} is done step by step. 

\subsubsection*{1. The extension $k_8/k$ is cyclic of degree $8$.}

For proving the cyclicity of $k_8/k$ we use a classical result in 
Galois theory (see \cite[Sect. 8.4$^*$]{LL}; each of the following
statements is completely elementary). Let $K/k$ be a normal extension 
with Galois group $G$, and let $L = K(\sqrt{\mu}\,)$ be a quadratic 
extension.
\begin{enumerate}
\item (\cite[F8.9]{LL}) $L/k$ is normal if and only if for every 
      $\sigma \in G$ there is an $\alpha_\sigma \in K$ such that 
      $\mu^{\sigma-1} = \alpha_\sigma^2$.
\item (\cite[F8.10]{LL}) Let $L/k$ be normal. Then we can define an 
      element $[\beta]$ in the second cohomology group $H^2(G,\mu_2)$ 
      with values in $\mu_2 = \{-1,+1\}$ by setting
      $$ \beta(\sigma,\tau) = 
           \alpha_\sigma^\tau \alpha_\tau \alpha_{\sigma\tau}^{-1}. $$
\item (\cite[F8.11]{LL}) If $L/k$ is normal, then $[\beta]$ is the 
      element of the second cohomology group attached to the group extension
      $$ \begin{CD} 
         1 @>>> \mu_2 @>>> \Gal(L/k) @>>> \Gal(K/k) @>>> 1. \end{CD} $$
\item (\cite[Ex. 6, p. 142]{LL}) If $K/k$ is cyclic of even degree, then 
      $L/k$ is cyclic if and only if $\alpha_\sigma^\nu = -1$, where
      $\sigma$ generates $G = \Gal(K/k)$ and where 
      $\nu = \sum_{\tau \in G} \tau$.
\end{enumerate}

Set $\sqrt{a-2bi} = \sqrt{m}/\sqrt{a+2bi}$, 
$\mu = \alpha + \beta\sqrt{a+2bi}$ and $\nu = \alpha + \gamma\sqrt{a-2bi}$. 
We find   
$$ \mu^{\sigma+1} = \alpha \alpha' + \alpha \beta' \sqrt{a-2bi}
                    + \alpha' \beta \sqrt{a+2bi}  + \beta\beta' \sqrt{m}
                 = \mu_\sigma^2 $$
for 
$$ \mu_\sigma = 
   \frac{\alpha + \beta \sqrt{a+2bi} + i\beta' \sqrt{a-2bi}\,}{1+i}, $$
where we have used $\alpha = i \alpha'$ and $\gamma = i\beta'$, as well 
as $(a-2bi)\gamma^2 = \alpha^2 - (a+2bi) \beta^2$. 
With $\alpha_\sigma = \mu_\sigma/\mu$ and $\nu= (1+\sigma)(1+\sigma^2)$
we now find
\begin{align*}
\mu_\sigma^{1+\sigma^2} & = -\beta \beta' \sqrt{m}  & 
       \mu^{1+\sigma^2} & = (a-2bi)\gamma^2  \\
\mu_\sigma^\nu          & = -(\beta\beta')^2 m     &  
\mu^\nu                 & = (\gamma \gamma')^2 m  = -\mu_\sigma^\nu  
\end{align*}
Thus $\alpha_\sigma^\nu = (\mu_\sigma/\mu)^\nu = -1$, which 
proves the claim.

\subsubsection*{2. The extension $k_8/k$ is unramified outside of $2$.}
This follows from a standard argument (see e.g. \cite{RR,LRR1}): if 
$$ \alpha^2- \mu \beta^2 -\mu'\gamma^2 = 0, $$
then 
\begin{equation}\label{Erelmn}
  2(\alpha + \beta \sqrt{\mu})(\alpha + \gamma \sqrt{\mu'})
  = (\alpha + \beta \sqrt{\mu} + \gamma \sqrt{\mu'})^2. 
\end{equation} 

This implies the claim because the ideal generated by 
$\alpha + \beta \sqrt{\mu}$ and its conjugate 
$\alpha + \gamma\sqrt{\mu'}$ is not divisible by any prime
ideal with odd norm.

Equation (\ref{Erelmn}) is a special case of the following 
simple but useful observation:

\begin{lem}
Assume that $Ax^2 - By^2 - Cz^2 = 0$. Then 
\begin{equation}\label{ELeg}
   2(x\sqrt{A} + y\sqrt{B}\,)(x\sqrt{A} + z\sqrt{C}\,)
           = (x\sqrt{A} + y\sqrt{B} + z\sqrt{C}\,)^2.
\end{equation}
\end{lem}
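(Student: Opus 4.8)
The plan is to verify the identity (\ref{ELeg}) by direct expansion of both sides, treating $\sqrt{A}$, $\sqrt{B}$, $\sqrt{C}$ as formal quantities; the identity is purely algebraic and makes no use of any special structure beyond the single quadratic relation in the hypothesis. First I would expand the right-hand side, which produces the three ``diagonal'' contributions $Ax^2 + By^2 + Cz^2$ together with the three ``cross'' contributions $2xy\sqrt{AB} + 2xz\sqrt{AC} + 2yz\sqrt{BC}$.

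Next I would expand the left-hand side. Multiplying out $(x\sqrt{A} + y\sqrt{B})(x\sqrt{A} + z\sqrt{C})$ and doubling yields $2Ax^2 + 2xy\sqrt{AB} + 2xz\sqrt{AC} + 2yz\sqrt{BC}$. The key observation is that the three cross terms here coincide exactly with those appearing on the right-hand side, so they cancel identically when the two sides are compared, independently of the hypothesis.

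What then remains is the difference of the diagonal parts, namely $2Ax^2 - (Ax^2 + By^2 + Cz^2) = Ax^2 - By^2 - Cz^2$, which vanishes precisely by the assumption $Ax^2 - By^2 - Cz^2 = 0$. Hence the two sides are equal. I do not expect any genuine obstacle: the computation is an elementary polynomial identity, and the only point worth flagging is that the cross terms are arranged to match automatically, so that the single quadratic relation is exactly (and only just) what is needed to force the two diagonal contributions into agreement. The relation to (\ref{Erelmn}) is then the special case $A = 1$, $B = \mu$, $C = \mu'$.
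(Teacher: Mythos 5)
Your proposal is correct: expanding both sides shows the cross terms agree identically and the diagonal terms differ by exactly $Ax^2 - By^2 - Cz^2$, which vanishes by hypothesis. This is precisely the ``straightforward calculation'' the paper invokes (its proof consists of that remark and a reference to \cite{Lqu}), so your argument matches the intended one, merely written out in full.
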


\begin{proof}
This is a straightforward calculation. See also \cite{Lqu}. 
\end{proof}

\begin{cor}
Let $m = a^2 + 4b^2$. Then $\sqrt{a+2bi}$ and $\sqrt{2a+2\sqrt{m}}\,)$
generate the same quadratic extension over $k_2$. If $m = p$ is
prime and $t^2 - pu^2 = -1$, then we also have $k_4 = k_2 (\sqrt{\eps}\,)$
for $\eps = t+u\sqrt{p}$.
\end{cor}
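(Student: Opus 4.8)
The plan is to convert each claimed equality of quadratic extensions of $k_2=\Q(i,\sqrt{m}\,)$ into a statement that a ratio of radicands is a square in $k_2$, using that $k_2(\sqrt{\delta_1}\,)=k_2(\sqrt{\delta_2}\,)$ as soon as $\delta_1/\delta_2\in(k_2^\times)^2$. For the first assertion I would check the identity
\[ (a+2bi+\sqrt{m}\,)^2 = (a+2bi)(2a+2\sqrt{m}\,), \]
which is immediate from $m=(a+2bi)(a-2bi)$: expanding the square, $(a+2bi)^2+m=(a+2bi)\big((a+2bi)+(a-2bi)\big)=2a(a+2bi)$, while the cross term contributes $2(a+2bi)\sqrt{m}$. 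This displays $\frac{2a+2\sqrt m}{a+2bi}$ as the square of $\frac{a+2bi+\sqrt m}{a+2bi}\in k_2^\times$, so $k_2(\sqrt{a+2bi}\,)=k_2(\sqrt{2a+2\sqrt m}\,)$; since $k_4=k_2(\sqrt{a+2bi}\,)$ by construction, the first assertion follows.

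For the second assertion take $m=p$ prime, so that $p=a^2+4b^2$ has a unique representation with $a,b>0$ (and $t=1$), and the hypothesis $t^2-pu^2=-1$ forces $N\eps=-1$. I would chain three identifications of square classes inside $F=\Q(\sqrt p\,)\subseteq k_2$. First, in the case $N\eps=-1$ of Theorem \ref{Th1} the single ideal $\fa=(\alpha)$ is principal and satisfies $\fa^2=(2b+\sqrt p\,)$ with $\eps=(2b+\sqrt p\,)/\alpha^2$; thus $(2b+\sqrt p\,)/\eps=\alpha^2\in(F^\times)^2$ and $k_2(\sqrt\eps\,)=k_2(\sqrt{2b+\sqrt p}\,)$. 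Second, applying the identity (\ref{ELeg}) to $p-(2b)^2-a^2=0$ gives $(a+2b+\sqrt p\,)^2=2(2b+\sqrt p\,)(a+\sqrt p\,)$, whence
\[ \frac{2b+\sqrt p}{2a+2\sqrt p}=\left(\frac{2b+\sqrt p}{a+2b+\sqrt p}\right)^2\in(F^\times)^2, \]
so $k_2(\sqrt{2b+\sqrt p}\,)=k_2(\sqrt{2a+2\sqrt p}\,)$. Third, the first assertion gives $k_2(\sqrt{2a+2\sqrt p}\,)=k_2(\sqrt{a+2bi}\,)=k_4$. Concatenating the three equalities yields $k_2(\sqrt\eps\,)=k_4$.

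The substantive ingredient is the unit relation $\eps=(2b+\sqrt p\,)/\alpha^2$, which is precisely what Theorem \ref{Th1} supplies, so I may quote it; if one wanted a self-contained argument, reproving this relation — equivalently, the link in Prop. \ref{PrQan} between solvability of the negative Pell equation and of $Q_b(x,y)=1$ — would be the main obstacle. Granting it, the remaining work is only bookkeeping of square classes, and two points deserve an explicit check: that $2a+2\sqrt p$, $2b+\sqrt p$ and $\eps$ all lie in $F$, so the three ratios above are honest squares already in $F$ and hence in $k_2$; and that the normalization of $\eps$ is irrelevant, since any unit of norm $-1$ is an odd power of the fundamental unit times $\pm1$, while even powers and $-1=i^2$ are squares in $k_2$, so $k_2(\sqrt\eps\,)$ depends only on $p$. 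The one non-routine manipulation is the bridging identity $(a+2b+\sqrt p\,)^2=2(2b+\sqrt p\,)(a+\sqrt p\,)$, but it drops straight out of (\ref{ELeg}) once $p=a^2+4b^2$ is written as $p-(2b)^2-a^2=0$.
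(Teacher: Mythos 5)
Your proof of the first assertion is correct and is essentially the paper's: both verify that $(a+2bi+\sqrt{m}\,)^2=(a+2bi)(2a+2\sqrt{m}\,)$, i.e.\ the special case of the Legendre identity (\ref{ELeg}) coming from $m-(2b)^2-a^2=0$ read in $\Z[i]$, so that the two radicands differ by a square in $k_2$. For the second assertion you take a genuinely different route. The paper stays in the Gaussian integers: from $2(t+i)(t+u\sqrt{m}\,)=(t+i+u\sqrt{m}\,)^2$ it gets $\eps\sim 2(t+i)$ modulo squares, and then invokes the descent factorization $t-i=-i(a+2bi)\rho^2$ of Subsection \ref{S1.2} to identify the square class of $2(t\pm i)$ with that of $a+2bi$ over $\Q(i)$. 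You instead work in the real quadratic field: you quote Theorem \ref{Th1} to get $\eps\sim 2b+\sqrt{p}$ modulo squares, bridge to $2a+2\sqrt{p}$ via a second instance of (\ref{ELeg}) applied to $p-(2b)^2-a^2=0$ over $\Q(\sqrt{p}\,)$, and then concatenate with the first assertion. Both chains are valid; your checks that everything lives in $F=\Q(\sqrt p\,)$ and that the normalization of $\eps$ is immaterial (since $-1=i^2$ is a square in $k_2$) are exactly the points that need saying. The trade-off is that the paper's argument is self-contained within Section \ref{SN1}'s descent computation, whereas yours outsources the arithmetic core to the cited Theorem \ref{Th1}; as you note yourself, reproving that unit relation would amount to redoing the $\Z[i]$ descent, which is precisely the step the paper uses directly. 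Your version has the mild advantage of exhibiting the intermediate square class $2b+\sqrt{p}$, which ties the corollary to the ideals $\fa_j=(2b+\sqrt{m},a)$ appearing elsewhere in the paper.
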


\begin{proof}
From $t^2 - mu^2 = -1$ we get $1 + t^2 - mu^2 = 0$, so (\ref{ELeg})
holds with $A = 1$, $B = -1$ and $C = m$. This shows that 
$$ 2(a+2bi)(a+\sqrt{m}\,) = (a+2bi+\sqrt{m}\,)^2 $$
and proves the first claim. Similarly, $t^2 - mu^2 + 1 = 0$ gives
$$ 2(t+i)(t+u\sqrt{m}\,) = (t+i+u\sqrt{m}\,)^2. $$

Next $t^2 - mu^2 = -1$ implies $t^2 + 1 = mu^2$ and 
$t-i = -i(a+2bi) \rho^2$ as in Subsection \ref{S1.2}. This shows that 
$\sqrt{i(t-i)}$ and $\sqrt{a+2bi}$ generate the same quadratic
extension of $F = \Q(i)$. Observe that since $2i = (1+i)^2$ is a 
square, we also have $F(\sqrt{i(t-i)}\,) = F(\sqrt{2(t-i)}\,)$. 
\end{proof}

\subsubsection*{3. The extension $k_8/k$ is unramified above $(2)$
     if $(\frac2{a+2b}) = 1$.}
We choose the sign of $a$ in such a way that $a \equiv 1 \bmod 4$; 
since $b$ is even, we have $a \equiv 1, 5 \bmod 8$, and the following 
congruences hold modulo $4$:
 \begin{align*}
                    & & a \equiv & 1 \bmod 8   &  &  a \equiv 5 \bmod 8 \\ 
  \Big( \frac{i+\sqrt{a+2bi}}{1+i}\Big)^2 
        & &  \equiv & \ \ \sqrt{a+2bi}  & & \equiv  \ 2 + 2i + \sqrt{a+2bi} \\
  \Big( \frac{2+i+\sqrt{a+2bi}}{1+i}\Big)^2 
        & &  \equiv & \ \ 2 + \sqrt{a+2bi} & &  \equiv  \ 2i + \sqrt{a+2bi} 
 \end{align*}

In Cor. \ref{Corx} below we will show that we can always choose a 
primitive solution $(\alpha,\beta,\gamma)$ of (\ref{EKum8}) in such 
a way that $(2+2i) \mid \alpha$, $\beta \equiv 1 \bmod 2$ and 
$\gamma = i\bbeta \equiv i \bmod 2$.
   
$$ \begin{array}{c|cc}
    x \bmod 2 &  \alpha \bmod 4 & \beta \bmod 4  \\  \hline
        0     &       0         &  \pm 1 + bi \\
        1     &     2 + 2i      &  \pm 1 + (b+2)i 
   \end{array} $$
Thus in the first case we have 
$\alpha + \beta \sqrt{a+2bi} \equiv \pm (1 + bi) \bmod 4$,  
which is a square modulo $4$ if $a \equiv 1 \bmod 8$ and 
$4 \mid b$, i.e., if $a + 2b \equiv 1 \bmod 8$;  moreover we have
$\alpha + \beta \sqrt{a+2bi} \equiv 2+2i \pm (1 + (b+2)i) \bmod 4$,  
which is a square modulo $4$ if $a \equiv 5 \bmod 8$ and 
$b \equiv 2 \bmod 4$, i.e., if $a + 2b \equiv 1 \bmod 8$. 

This completes the proof of Thm. \ref{TConk8}.

\bigskip

A little surprisingly, the diophantine equation (\ref{EKum8}) in 
$\Z[i]$ can be reduced to the following equation in rational integers 
(see \cite[p. 76]{LD}):
\begin{equation}\label{EQb} 
    Q(r,s) = br^2 + ars - bs^2 = 2x^2.
\end{equation} 
In fact, we claim

\begin{prop}
Let $m = a^2 + 4b^2$ for integers $a \equiv 1 \bmod 2$ and $b$.
Then the following statements are equivalent:
\begin{enumerate}
\item[1)] Equation (\ref{EKum8}) is solvable.
\item[2)] $[\frac{a+2bi}{\pi}] = 1$ for all $\pi \mid (a-2bi)$;
\item[3)] The primes $p \mid m$ have inertia degree $1$ in 
      $\Q(i,\sqrt{a+2bi}\,)$ 
\item[4)] $(\frac{2b}p) = +1$ for all primes $p \mid m$.
\item[5)] Equation (\ref{EQb}) is solvable.
\end{enumerate}
\end{prop}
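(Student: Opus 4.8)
The plan is to prove the five statements equivalent by running the cycle $5)\Rightarrow 1)\Rightarrow 2)\Rightarrow 3)\Rightarrow 4)\Rightarrow 5)$, arranged so that the single genuinely global ingredient — one application of Hasse--Minkowski over $\Q$ — is confined to the last implication, while every other step is an explicit identity or a ``global $\Rightarrow$ local'' reduction. Write $\mu=a+2bi$ and $\bmu=a-2bi$, so $\mu\bmu=m$; I assume $m$ squarefree, which (together with $m$ odd, so that $(1+i)\nmid\bmu$) makes the prime bookkeeping clean: each $p\mid m$ splits as $p=\pi_p\bpi_p$ in $\Z[i]$, and after labelling we may take $\pi_p\mid\mu$, $\bpi_p\mid\bmu$. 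Thus the primes dividing $\bmu$ are exactly the $\bpi_p$ with $p\mid m$, and each $\pi_p$ ramifies in $\Q(i,\sqrt{\mu}\,)/\Q(i)$ while $\bpi_p$ is unramified.

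For $5)\Rightarrow 1)$ I would simply exhibit the solution. Writing $\beta=r+si$ and taking the ansatz $\gamma=i\bbeta=s+ri$, one has $\gamma^2=-\bbeta^2$, so $\mu\beta^2+\bmu\gamma^2=\mu\beta^2-\ov{\mu\beta^2}=2i\,\mathrm{Im}(\mu\beta^2)=4i\,Q_b(r,s)$; hence a solution of $Q_b(r,s)=2x^2$ produces a solution of (\ref{EKum8}) with $\alpha=2x(1+i)$, since $(2x(1+i))^2=8ix^2$. For $1)\Rightarrow 2)$ I would reduce a primitive solution of (\ref{EKum8}) modulo $\bpi_p$: since $\bpi_p\mid\bmu$ this gives $\alpha^2\equiv\mu\beta^2\bmod\bpi_p$, and comparing $\bpi_p$-adic valuations (using $v_{\bpi_p}(\bmu)=1$ and $\gcd(\alpha,\beta)=1$) shows $\bpi_p\nmid\beta$, whence $\mu\equiv(\alpha\beta^{-1})^2$ and $[\frac{\mu}{\bpi_p}]=1$, which is $2)$. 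The equivalences $2)\Leftrightarrow 3)\Leftrightarrow 4)$ are then local identities: $2)$ says precisely that each $\bpi_p$ splits in $\Q(i,\sqrt{\mu}\,)/\Q(i)$, which — the other prime $\pi_p$ above $p$ being ramified, hence of residue degree $1$ — is exactly the assertion $3)$ that $p$ has inertia degree $1$ in $\Q(i,\sqrt{\mu}\,)$; and since $a\equiv 2bi\bmod\bpi_p$ gives $\mu\equiv 2a\bmod\bpi_p$, one has $[\frac{\mu}{\bpi_p}]=(\frac{2a}{p})$, which equals $(\frac{2b}{p})$ because $(a+2b)^2\equiv 4ab\bmod p$ forces $(\frac ap)=(\frac bp)$. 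This is $4)$.

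The remaining implication $4)\Rightarrow 5)$ carries the real content. Multiplying (\ref{EQb}) by $4b$ and completing the square (with $w=2br+as$) turns $Q_b(r,s)=2x^2$ into $w^2-ms^2=8bx^2$; over $\Q$ this conic has a nontrivial point if and only if the quaternion algebra $(m,2b)$ splits, i.e. if and only if $\big(\frac{m,2b}p\big)=1$ at every place. I would check that these local symbols are trivial away from $2$: at $\infty$ because $m>0$; at odd $p\nmid m$ because $m\equiv a^2\bmod p$ is a square there; and at odd $p\mid m$ the tame symbol equals $(\frac{2b}p)$, which is $1$ by $4)$. Hilbert reciprocity $\prod_p\big(\frac{m,2b}p\big)=1$ then forces the single remaining dyadic symbol $\big(\frac{m,2b}2\big)=1$ as well, so by Hasse--Minkowski the conic has a rational point and $5)$ holds. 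The converse $5)\Rightarrow 4)$ is immediate, since a rational point gives local solvability everywhere, hence $(\frac{2b}p)=1$ for all $p\mid m$.

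The main obstacle is the dyadic place: it is the one local condition not directly read off from the symbols $(\frac{2b}p)$, and the point is to eliminate it via the product formula rather than through a direct and delicate $2$-adic computation. The same mechanism makes the corresponding computation over $\Q(i)$ come out cleanly — if one preferred a direct $1)\Leftrightarrow 2)$ by Hasse--Minkowski over $\Q(i)$, the contributions at $\pi_p$ and $\bpi_p$ would there enter as a square, again forcing the dyadic symbol to vanish. A secondary technical matter is to secure the clean prime dictionary above — squarefreeness of $m$, coprimality of $\mu$ and $\bmu$, and $(1+i)\nmid\bmu$ — and to use primitivity honestly in $1)\Rightarrow 2)$; I expect these to be routine but worth stating explicitly.
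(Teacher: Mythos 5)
Your proof is correct, and it is organized genuinely differently from the paper's. The paper establishes the equivalences pairwise and symmetrically --- 1)$\,\Leftrightarrow\,$2), 2)$\,\Leftrightarrow\,$3), 2)$\,\Leftrightarrow\,$4), 4)$\,\Leftrightarrow\,$5) --- and therefore invokes the local--global principle \emph{twice}: once over $\Q(i)$ for the ternary equation (\ref{EKum8}) (where, exactly as in your closing remark, the local conditions at $\pi_p$ and $\bpi_p$ coincide under conjugation and the product formula disposes of the place $(1+i)$), and once over $\Q$ for the conic $(2br+as)^2 - ms^2 = 8bx^2$. You instead run a one-way cycle and replace the hard direction 2)$\,\Rightarrow\,$1) by the detour 2)$\,\Rightarrow\,$4)$\,\Rightarrow\,$5)$\,\Rightarrow\,$1), where 5)$\,\Rightarrow\,$1) is the explicit substitution $\alpha=2x(1+i)$, $\beta=r+si$, $\gamma=i\bbeta$; this is precisely the paper's Lemma \ref{Lequ}, which the paper states only \emph{after} the proposition as an a posteriori algebraic explanation of the simultaneous solvability of (\ref{EKum8}) and (\ref{EQb}). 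Your arrangement buys a single application of Hasse--Minkowski (over $\Q$ only) and makes one implication constructive; the cost is that 1)$\,\Leftrightarrow\,$2) is no longer self-contained over $\Q(i)$ but is routed through the rational form. Your symbol computation, $[\frac{a+2bi}{\bpi_p}]=(\frac{2a}{p})$ via $a+2bi\equiv 2a \bmod \bpi_p$ followed by $(\frac ap)=(\frac bp)$, is a harmless variant of the paper's $a+2bi\equiv 4bi=2b(1+i)^2 \bmod \pi$, which yields $(\frac{2b}{p})$ directly. The technical points you flag --- squarefreeness of $m$ (hence $\gcd(\mu,\bmu)=1$ and the passage from an arbitrary to a primitive solution in 1)$\,\Rightarrow\,$2)) --- are genuinely needed but are consistent with the paper's standing hypotheses, so there is no gap.
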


\begin{proof}
For proving that 1) $\Lra$ 2), recall that ternary quadratic equations 
such as (\ref{EKum8}) are solvable globally if and only if they are 
everywhere locally solvable. Since we may disregard one place 
because of the product formula, all we need to check is solvability 
at the primes dividing $a \pm 2bi$. This proves the equivalence 
of 1) and 2); the third statement is a translation of the second 
using the decomposition law in quadratic extensions. 

Next we claim that $[\frac{a+2bi}{\pi}] = (\frac{2b}p)$ for every
prime $\pi \mid (a-2bi)$ with norm $p$; this shows that 2) and 4)
are equivalent. To this end, observe that
$2bi \equiv a \bmod \pi$; this shows that 
$[\frac{a+2bi}{\pi}] = [\frac{4bi}{\pi}] = [\frac{2b}{\pi}] = (\frac{2b}p)$,
where we have used simple properties of residue symbols (see
\cite[Ch. 4]{LRL}) and the fact that $2i = (1+i)^2$ is a square.

Finally we prove that 4) and 5) are equivalent. To this end we observe
that (\ref{EQb}) is solvable in the rationals if and only if it is 
solvable everywhere locally. We may omit proving solvability in $\Q_2$ 
by the product formula. Now $br^2 + ars - bs^2 = 2x^2$ can be written in 
the form $(2br+as)^2 - m s^2 = 8bx^2$; this norm equation is 
solvable if and only if $(\frac{2b}p) = +1$ for all $p \mid m$.
\end{proof}

The fact that (\ref{EKum8}) and (\ref{EQb}) are simultaneously
solvable suggests that there exists an algebraic relation between 
its solutions. Such a relation does indeed exist:

\begin{lem}\label{Lequ}
Let $Q = Q_b = (b,a,-b)$ be a quadratic form with discriminant 
$m = a^2 + 4b^2$. If (\ref{EQb}) has a solution in nonzero integers, then 
\begin{equation}\label{Eabc}
   \alpha = 2x(1+i),  \qquad \beta = r+si, \qquad \gamma = s + ri 
\end{equation}
satisfy Equation (\ref{EKum8}). Moreover, $\gcd(r,s) \mid \gcd(\alpha,\beta)$.
\end{lem}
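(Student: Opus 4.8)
The plan is to verify the two assertions by direct substitution, using the defining relation $Q_b(r,s) = 2x^2$ to collapse the algebra. First I would substitute the proposed values $\alpha = 2x(1+i)$, $\beta = r+si$, $\gamma = s+ri$ into the left-hand side of Equation (\ref{EKum8}) and expand. The key computational observations are that $\alpha^2 = 4x^2(1+i)^2 = 8ix^2$, that $\beta^2 = (r+si)^2 = (r^2-s^2) + 2rsi$, and that $\gamma^2 = (s+ri)^2 = (s^2-r^2) + 2rsi$. I would then form the combination $(a+2bi)\beta^2 + (a-2bi)\gamma^2$; because $\beta^2$ and $\gamma^2$ have the \emph{same} imaginary part $2rsi$ and \emph{opposite} real parts $\pm(r^2-s^2)$, the expansion should separate cleanly into a piece governed by the real part $r^2-s^2$ (which will pair with the $2bi$ and $a$ coefficients) and a piece governed by $2rs$. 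Collecting terms, I expect the whole expression $(a+2bi)\beta^2+(a-2bi)\gamma^2$ to reduce to something proportional to $br^2 + ars - bs^2$ times a factor of $i$, so that matching it against $\alpha^2 = 8ix^2$ is exactly the statement $br^2+ars-bs^2 = 2x^2$, i.e.\ Equation (\ref{EQb}).

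Concretely, I anticipate that after expansion one finds
$$ (a+2bi)\beta^2 + (a-2bi)\gamma^2 = 4i\bigl(br^2 + ars - bs^2\bigr) = 4i \cdot 2x^2 = 8ix^2 = \alpha^2, $$
which is precisely (\ref{EKum8}). The cross-terms in $a(r^2-s^2)$ and $2b \cdot 2rs = 4brs$ from the two summands should cancel or combine to give only the $ars$ and $b(r^2-s^2)$ contributions, with the real parts cancelling against each other so that the result is purely imaginary, matching $\alpha^2 = 8ix^2$. Verifying this cancellation is a short but slightly delicate bookkeeping exercise, since one must track four products $a\cdot(\text{real}), a\cdot(\text{imag}), 2bi\cdot(\text{real}), 2bi\cdot(\text{imag})$ for each of $\beta^2$ and $\gamma^2$; this is the step where a sign error is most likely, so it is the main thing to check carefully, though it is entirely routine.

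For the divisibility claim $\gcd(r,s) \mid \gcd(\alpha,\beta)$, I would argue as follows. If $d = \gcd(r,s)$, then $d \mid \beta = r+si$ immediately. For $\alpha = 2x(1+i)$ I would use the relation $2x^2 = br^2 + ars - bs^2$: since $d \mid r$ and $d \mid s$, we have $d^2 \mid (br^2+ars-bs^2) = 2x^2$, whence $d \mid x$ (using that $d$ is an integer and $d^2 \mid 2x^2$ forces $d \mid x$, as $\gcd$ considerations over $\Z[i]$ respect the rational prime factorization), and therefore $d \mid 2x(1+i) = \alpha$. Thus $d$ divides both $\alpha$ and $\beta$, giving $\gcd(r,s) \mid \gcd(\alpha,\beta)$. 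The only subtlety here is the passage from $d^2 \mid 2x^2$ to $d \mid x$; I expect this to be harmless once one notes that $m = a^2+4b^2$ is odd (so $a$ is odd) and argues prime-by-prime, but it is worth stating explicitly rather than waving through.
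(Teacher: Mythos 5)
Your proposal is correct and follows essentially the same route as the paper's proof: a direct expansion of $(a+2bi)\beta^2+(a-2bi)\gamma^2$, whose real parts cancel and whose imaginary parts sum to $4iQ_b(r,s)=8ix^2=\alpha^2$, followed by the observation that $d=\gcd(r,s)$ satisfies $d^2\mid 2x^2$, hence $d\mid x$ and $d\mid\alpha$. The sign bookkeeping you flag does work out exactly as you anticipate, and your explicit justification of $d^2\mid 2x^2\Rightarrow d\mid x$ is if anything slightly more careful than the paper's.
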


\begin{proof}
We find
\begin{align*}
       \alpha^2 & = 8x^2i, \\
  (a+2bi) \beta^2  & = (a+2bi)(r^2 - s^2 + 2rsi) \\
                   & = a(r^2-s^2) - 4brs + Q(r,s) \cdot bi \\
                   & =  a(r^2-s^2) - 4brs + 4x^2i, \\
 (a-2bi) \gamma^2  & = -a(r^2-s^2) + 4brs + 4x^2i,
\end{align*}
which immediately implies the first claim.

Clearly the square of $\gcd(r,s)$ divides $2x^2$,
hence $\gcd(r,s) \mid x$. This implies 
$\gcd(r,s) \mid \gcd(\alpha,\beta,\gamma)$ via (\ref{Eabc}).
\end{proof}

Observe that we do not claim that every solution of (\ref{EKum8}) 
comes from a solution of (\ref{EQb}) via the formulas (\ref{Eabc}).

\begin{cor}\label{Corx}
Replacing $b$ by $-b$ if necessary we can always find a solution
$(\alpha,\beta,\gamma)$ of (\ref{EKum8}) such that
$(2+2i) \mid \alpha$, $\beta \equiv 1 \bmod 2$ and $\gamma \equiv i \bmod 2$.
In this case, we have
$\alpha \equiv (2+2i)x \bmod 4$ and $\beta \equiv \pm 1 + (b+2x)i \bmod 4$.
\end{cor}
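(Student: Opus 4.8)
The plan is to reduce everything to a single, suitably normalized solution of the rational equation (\ref{EQb}) and then to feed it into Lemma \ref{Lequ}. Since we are in the situation where $k_8$ exists, Equation (\ref{EKum8}) is solvable, so by the preceding Proposition (the equivalence of statements 1) and 5)) Equation (\ref{EQb}) has an integral solution. Among all such I would select a \emph{primitive} one, i.e.\ one with $\gcd(r,s)=1$: if $d=\gcd(r,s)$ then $d^2 \mid 2x^2$, which by the argument already recorded in Lemma \ref{Lequ} gives $d \mid x$, so dividing $r,s,x$ through by $d$ yields another solution of $Q_b(r,s)=2x^2$, now with coprime $r,s$ and still $x\ne 0$.

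Next I would settle the parities. Recall that here $b$ is even and $a$ is odd, so reducing $br^2+ars-bs^2=2x^2$ modulo $2$ collapses it to $rs\equiv 0 \bmod 2$; combined with $\gcd(r,s)=1$ this forces \emph{exactly one} of $r,s$ to be even. If $r$ is odd and $s$ even I keep the solution. If instead $r$ is even and $s$ odd I replace $b$ by $-b$: the identity $Q_{-b}(s,r)=Q_b(r,s)$ (for the form $Q_{-b}=(-b,a,b)$) shows that $(s,r,x)$ solves the equation attached to $-b$, and after this swap the first coordinate is odd and the second even again. In either case Lemma \ref{Lequ} produces $\alpha=2x(1+i)$, $\beta=r+si$, $\gamma=s+ri$ for the normalized $(r,s)$. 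Then $\alpha=(2+2i)x$ gives $(2+2i)\mid\alpha$, while $r$ odd and $s$ even give $\beta\equiv 1\bmod 2$ and $\gamma\equiv i\bmod 2$, as required.

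For the refinements modulo $4$, the claim about $\alpha$ is immediate, since $\alpha=(2+2i)x$ is an exact identity. For $\beta$ I would write $b=2b'$ and $s=2s'$, substitute into $br^2+ars-bs^2=2x^2$, and divide by $2$ to get $b'r^2+ars'-4b's'^2=x^2$. Reducing this modulo $2$ and using $a\equiv r\equiv 1$ yields $b'+s'\equiv x\bmod 2$, hence $s=2s'\equiv 2x+2b'=2x+b\bmod 4$. As $r$ is odd we have $r\equiv\pm 1\bmod 4$, and therefore $\beta=r+si\equiv\pm 1+(b+2x)i\bmod 4$.

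The only genuine subtlety — and the step I would treat most carefully — is the sign normalization. One must recognize that interchanging the two variables of $Q_b$ is the same as negating $b$, which is precisely the freedom advertised in the statement; this is what allows the even coordinate always to be moved into the $s$-slot (equivalently, into $\gamma$). One should also verify that the final congruence for $\beta$ is insensitive to this sign change: because $b$ is even we have $-b\equiv b\bmod 4$, so whether the normalized form carries $b$ or $-b$ the right-hand side still reads $b+2x$, and the formula $\beta\equiv\pm 1+(b+2x)i\bmod 4$ is unambiguous.
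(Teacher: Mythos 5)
Your proposal is correct and follows essentially the same route as the paper: pass to a primitive solution of (\ref{EQb}), use the parity of $b$ and $a$ together with $\gcd(r,s)=1$ to force exactly one of $r,s$ even, move the even entry into the $s$-slot by the $b\mapsto -b$ swap, deduce $s\equiv b+2x\bmod 4$ by reduction, and feed the result into Lemma \ref{Lequ}. Your added observations (deriving solvability of (\ref{EQb}) from the Proposition, and checking that $-b\equiv b\bmod 4$ makes the final congruence sign-independent) are harmless refinements of the same argument.
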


\begin{proof}
Assume that $br^2 + ars - bs^2 = 2x^2$ has a solution, which we
assume to be primitive ($\gcd(r,s) = 1)$; since $b$ is even and 
$a$ is odd, we must have $2 \mid rs$. If $r$ is even, replacing
$b$ by $-b$ and $(r,s)$ by $(s,-r)$ we get a primitive solution
in which $s$ is even. 

Thus we may assume that $r$ is odd and $s$ is even. Reducing
$br^2 + ars - bs^2 = 2x^2$ modulo $4$ shows that 
$2x^2 \equiv b + s \bmod 4$, which implies $s \equiv b+2x \bmod 4$ 
as claimed. The other claims follow from Lemma \ref{Lequ}.
\end{proof}



We now give some explicit examples.

\subsection*{The Case $m = p$ for primes $p \equiv 1 \bmod 8$.}

If $m = p$ is prime, then  $[\frac{a+2bi}{a-2bi}] = (\frac{2a}p) = 1$ 
if and only if $p \equiv 1 \bmod 8$. Here are a few examples:

$$ \begin{array}{r|rccccc}
   p &  h & a+2bi & \alpha & \beta & \gamma & \mu \\ \hline 
\rsp  17 &  4 & 1 + 4i & 2 + 2i &  1   &  i   & 2+2i + \sqrt{1+4i} \\
\rsp  41 &  8 & 5 + 4i & 2 + 2i &  1   &  i   & 2+2i + \sqrt{5+4i} \\
\rsp  73 &  4 & -3 + 8i & 6 - 6i & 1+2i  & 2-i & 6-6i + (1+2i)\sqrt{-3+8i} \\
\rsp  89 & 12 & 5 + 8i & 10+10i & 3+2i & 2+3i & 10+10i + (3+2i)\sqrt{5+8i} \\
\rsp  97 &  4 & 9 + 4i & 2 + 2i &  1   &  i   & 2+2i + \sqrt{9+4i} 
   \end{array} $$ 

The extension $k_8 = k_4(\sqrt{\mu}\,)$ is unramified outside $2$,
and unramified everywhere if and only if $h \equiv 0 \bmod 8$ (or, 
equivalently, if $a + 2b \equiv \pm 1 \bmod 8$).
The first few examples of unramified extensions are

$$ \begin{array}{r|rccccc}
   p &  h & a+2bi & \alpha & \beta & \gamma & \mu \\ \hline 
\rsp  41 &  8 &   5 +  4i & 2 + 2i &  1   &  i   & 2+2i + \sqrt{5+4i} \\
\rsp 113 &  8 &  -7 +  8i & 2 + 2i & 1+2i & 2+i  & 2+2i + (1+2i)\sqrt{-7+8i} \\
\rsp 137 &  8 & -11 +  4i & 2 + 2i &  1   &  i   & 2+2i + \sqrt{-11+4i} \\
\rsp 257 & 16 &   1 + 16i & 4 + 4i &  1   &  i   & 4+4i + \sqrt{1+16i} 
   \end{array} $$

\subsection*{The Case $m = pq$ for primes $p \equiv q \equiv 1 \bmod 8$.}
In this case, the class of $(2,2,n)$ with $n = \frac{1-pq}2$ is a square. 
Since the forms $(p,0,q)$ are not equivalent to $(2,2,n)$, the class
group contains a subgroup of type $(2,4)$. It contains $(4,4)$
if and only if the class of $(p,0,q)$ is a square, which happens
if and only if $(\frac pq) = 1$.

\subsection*{Case A: $(\frac pq) = -1$}
In this case, the $2$-class group has type $(4,2)$, the square
class being generated by $(2,2,n)$. Since $2$ splits in the three
quadratic extension $k(i)$, $k(\sqrt{p}\,)$ and $k(\sqrt{q}\,)$,
one of them can be embedded into an unramified $C_4$-extension.
By R\'edei-Reichardt, this $C_4$-extension is generated by 
the square root of $a+2bi$, where $m = a^2 + 4b^2$. 

Writing $m = c^2 + 4d^2$ as a sum of squares in an essentially different
way does not produce anything new because of
$$ k_2(\sqrt{a+2bi}\,) = k_2(\sqrt{p(a+2bi)}\,) = k_2(\sqrt{c+2di}\,). $$
Since $(\frac pq) = -1$, this implies that exactly one among the two 
equations of type (\ref{E2Dn}) has a solution.

The unramified extension $k_2(\sqrt{p},\sqrt{a+2bi}\,)$ of type
$(2,4)$ over $k$ is the full Hilbert $2$-class field if and
only if $(\frac{2}{a+2b}) = -1$. Observe that 
$(\frac{2}{a+2b})= (\frac{2}{c+2d})$ since $2$ splits in both or
in neither of the two quartic extensions.

$$ \begin{array}{rr|c|c|c|cl}
    p &  q  & \Cl(-4pq) &  \text{generators} 
      & (b,a,-b) & (\frac{2}{a+2b}) & \ (r,\ s,\ x) \\ \hline 
   17 &  41 & (4, 2)    & (19,10,38) & ( 8, 21, -8)  & -1 & (1,0,2) \\
      &     &           & (29,24,29) & (12, 11, -12) & -1 & - \\
   17 &  73 & (16, 2)   & (3,2,414)  & ( 2, 35, -2)  & +1 & (1,0,1)  \\
      &     &           & (17,0,73)  & (10, 29, -10) & +1 & - \\
   17 &  97 & (24, 2)   & (3,2,550)  & (16, 25, -16) & +1 & (1,2,1)  \\
      &     &           & (17,0,97)  & (20,  7, -20) & +1 & - \\
   17 & 113 & (20, 2)   & (35,-4,55) & (18, 25, -18) & -1 & (5,-2,8) \\
      &     &           & (17,0,113) & (10, 39, -10) & -1 & - \\
   41 &  89 & (20, 2)   & (31,6,118) & (10, 57, -10) & -1 & (6,-1,2)  \\
      &     &           & (65,48,65) & (30,  7, -30) & -1 & - \\
   41 &  97 & (28, 2)   & (3,2,1326) & ( 8, 61, -8)  & -1 & (1,0,2) \\
      &     &           & (41,0,97)  & (28, 29, -28) & -1 & - 
  \end{array} $$

\subsection*{Case B: $(\frac pq) = +1$}

Let $m = pq = a^2 + 4b^2$. Writing $\pi = a_1 + 2b_1i$ and 
$\rho = a_2 + 2b_2i$ we find 
$\pi \rho  = a_1a_2 - 4b_1b_2 + 2(a_1b_2 + a_2b_1)i$ and
$\pi \brho = a_1a_2 + 4b_1b_2 + 2(a_1b_2 - a_2b_1)i$.
Since $[\frac{\bpi}{\pi}] = 1$ etc., we find 
$[\frac{\pi\rho}{\bpi}] = [\frac{\rho}{\bpi}] = [\frac{\rho}{\pi}] = 
 (\frac pq)_4^{\phantom{p}}(\frac qp)_4^{\phantom{p}}$. This shows
that the solvability conditions $[\frac{a+2bi}{\pi}] = +1$ are
equivalent to $(\frac pq)_4^{\phantom{p}}(\frac qp)_4^{\phantom{p}} = 1$.

Note that $(\frac2{a+2b}) = [\frac{1+i}{a+2bi}] = 
  (\frac2{pq})_4^{\phantom{p}}(\frac{pq}2)_4^{\phantom{p}}$,
where $(\frac m2)_4^{\phantom{p}} = (-1)^{(m-1)/8}$ for integers
$m \equiv 1 \bmod 8$.

\begin{thm}\label{T14}
Assume that $m = pq = a^2 + 4b^2$ is the product of two primes 
$p \equiv q \equiv 1 \bmod 8$ with $(\frac pq) = 1$. Then the
quartic extension $k(\sqrt{a+2bi}\,)/k$ can be embedded into an 
unramified cyclic octic extension if and only if 
 $(\frac2{pq})_4^{\phantom{p}}(\frac{pq}2)_4^{\phantom{p}} = 
  (\frac pq)_4^{\phantom{p}}(\frac qp)_4^{\phantom{p}} = 1$.
More precisely, the following statements are true:
\begin{enumerate}
\item The primes above $2$ split in the quartic extension 
      $k(\sqrt{a+2bi}\,)/k$ if and only if 
      $(\frac2{pq})_4^{\phantom{p}}(\frac{pq}2)_4^{\phantom{p}} = 1$.
\item Equation (\ref{EKum8}) is solvable if and only if
      $(\frac pq)_4^{\phantom{p}}(\frac qp)_4^{\phantom{p}} = 1$.
\item The equation $Q(r,s) = 2x^2$, where $Q = (b,a,-b)$, is solvable
      in integers if and only if  
      $(\frac pq)_4^{\phantom{p}}(\frac qp)_4^{\phantom{p}} = 1$.
\item The cyclic octic extension $k_8/k$ constructed from a solution 
      of (\ref{EKum8}) is unramified above the primes dividing $(2)$
      if and only if 
      $(\frac2{pq})_4^{\phantom{p}}(\frac{pq}2)_4^{\phantom{p}} = 1$.
\end{enumerate}
\end{thm}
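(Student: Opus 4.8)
The plan is to prove Theorem~\ref{T14} by reducing each of the four numbered statements to facts already established earlier in the paper, so that the real work is reciprocity bookkeeping rather than new machinery. Let me think about what each part needs.

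Part (2) is essentially free: the Proposition before Lemma~\ref{Lequ} established the equivalence of the solvability of (\ref{EKum8}) with the condition $[\frac{a+2bi}{\pi}]=1$ for all $\pi\mid(a-2bi)$, and the discussion immediately preceding the theorem (the $\pi\rho$ computation in Case~B) already showed that these local symbols all equal $(\frac pq)_4(\frac qp)_4$. So (2) is just a citation of that computation. Part (3) follows from (2) by the same Proposition, which lists the solvability of $Q(r,s)=2x^2$ (equation (\ref{EQb})) as statement~5), equivalent to statement~1). So (2) $\Leftrightarrow$ (3) requires no new argument.

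Parts (1) and (4) are the genuinely new claims, and both hinge on the splitting behavior of the primes above $2$ in $k(\sqrt{a+2bi}\,)$. For part (1), the key identity is the one stated right before the theorem, $(\frac2{a+2b}) = [\frac{1+i}{a+2bi}] = (\frac2{pq})_4(\frac{pq}2)_4$. I would first justify the middle equality by relating the rational symbol $(\frac2{a+2b})$ to a residue symbol in $\Z[i]$ (using that $1+i$ generates the prime above $2$), then compute $[\frac{1+i}{a+2bi}]$ via the supplementary laws for the biquadratic residue symbol, splitting $a+2bi = \pi\rho$ into its two prime factors and applying the product formula for $(\frac2{\cdot})_4$. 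The congruence table at the end of step~3 of the proof of Thm.~\ref{TConk8}, together with the observation that $2$ splits in $k(\sqrt{a+2bi}\,)$ exactly when $\alpha+\beta\sqrt{a+2bi}$ is locally a square at the primes above $2$, then converts this symbol into the splitting statement. Part (4) is then immediate from part~(1) combined with the final clause of Thm.~\ref{TConk8}: the octic $k_8/k$ is unramified at $(2)$ iff $a+2b\equiv\pm1\bmod8$, and unwinding $(\frac2{a+2b})=1$ is precisely this congruence.

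The main obstacle I expect is keeping the two distinct biquadratic-character conditions cleanly separated: the ramification/splitting condition $(\frac2{pq})_4(\frac{pq}2)_4=1$ governing parts (1) and (4), versus the embeddability/solvability condition $(\frac pq)_4(\frac qp)_4=1$ governing parts (2) and (3). These are logically independent—a quartic extension can be ramified at $2$ yet still embeddable, or vice versa—so I must resist conflating them even though the stated ``if and only if'' of the theorem asserts both simultaneously. The delicate computation will be verifying $[\frac{1+i}{a+2bi}]=(\frac2{pq})_4(\frac{pq}2)_4$, where I need the rational-quartic symbol $(\frac m2)_4=(-1)^{(m-1)/8}$ to interact correctly with the $\Z[i]$-symbol; I would lean on the residue-symbol properties in \cite[Ch.~4]{LRL} for this, and verify the result against the tabulated examples in Case~A as a sanity check on signs.
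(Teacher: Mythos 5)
Your proposal is correct and follows essentially the same route as the paper: parts (2) and (3) are read off from the earlier five-way Proposition together with the symbol computations displayed just before the theorem, and parts (1) and (4) reduce to the identity $(\frac2{a+2b}) = [\frac{1+i}{a+2bi}] = (\frac2{pq})_4(\frac{pq}2)_4$ combined with the ramification criterion $a+2b\equiv\pm1\bmod 8$ from Theorem~\ref{TConk8}. The only cosmetic difference is that the paper's one-line formal proof pivots on condition 4) of that Proposition, namely $(\frac bp)=1$ with $b=a_1b_2+a_2b_1$ and Burde's rational reciprocity, whereas you pivot on condition 2) via the $[\frac{\pi\rho}{\bpi}]$ computation; both equivalences are already established in the text.
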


\begin{proof}
The condition $(\frac bp) = 1$ is equivalent to 
$(\frac{a_1b_2 + a_2b_1}{p}) = 1$, which by Burde's rational
reciprocity law is equivalent to 
$(\frac pq)_4^{\phantom{p}}(\frac qp)_4^{\phantom{p}} = 1$.
\end{proof}

In the table below, $\eps$ denotes the values of $(\frac pq)_4^{\phantom{p}}$
and $(\frac qp)_4^{\phantom{p}}$, respectively.

$$ \begin{array}{rr|c|c|c|ccl}
    p &  q  & \Cl(-4pq) &  \text{generators} 
      & (b,a,-b) & (\frac{2}{a+2b}) & \eps & \ (r,\ s,\ x) \\ \hline 
   17 &  89 & (4, 4)    & (11,8,139) & (14, 27, -14) & +1 & -1 & - \\
      &     &           & (19,16,83) & ( 6, 37, -6)  & +1 & +1 & - \\
   17 & 137 & (8, 4)    & (5,2,466)  & (24,  5, -24) & -1 & -1 & - \\
      &     &           & (35,-8,67) & (20, 27, -20) & -1 & +1 & - \\
   41 &  73 & (12, 4)   & (3,2,998)  & (26, 17, -26) & -1 & +1 & - \\
      &     &           & (29,-18,106) & (14, 47, -14) & -1 & -1 & - \\
   41 & 113 & (8, 4)    & ( 7, 2, 662)   & (34,  3, -34) & +1 & -1 & - \\
      &     &           & (23, -12, 203) &  ( 6, 67, -6)  & +1 & +1 & - \\
   73 &  89 & (8, 8)    & (19, 2,342)  &  (32, 49, -32) & +1 & +1 & (1,0,4) \\
      &     &           & (26,-18,253) &  ( 8, 79, -8)  & +1 & +1 & (1,0,2) 
  \end{array} $$

The cyclic quartic extensions of $\Q(\sqrt{-17 \cdot 137}\,)$ are
generated by 
$$ \sqrt{5+48i}, \quad \sqrt{-51+4\sqrt{17}}, \quad 
   \sqrt{-3699 + 316\sqrt{137}} $$ 
Since $(1+i)$ is inert in $\Q(\sqrt{5+48i}\,)/\Q(i)$, this extension
cannot be embedded into a cyclic quartic unramified extension by Thm.
\ref{T14}.(1). Similarly, $1+4i$ is inert in this extension since 
$[\frac{11+4i}{1+4i}] = (\frac{10}{17}) = -1$.

\bigskip

Let us now make a few simple remarks on special cases where the
equation
\begin{equation}\label{Ers}
       br^2 + ars - bs^2 = 2x^2
\end{equation}       
is solvable.
\begin{enumerate}
\item $b = 2d^2$: then $(r,s,x) = (b,a,bd)$ is a solution. Note that
      $p = a^2 + 4b^2 = a^2 + (2d)^4$ in this case.
\item $b = d^2$: then $(r,s,x) = (2b,a+e,f)$ is a solution, where 
      $p = e^2 + 2f^2$.  
\end{enumerate}

\section{$2$-Descent on Elliptic Curves}

In this section we will give an exposition of Ap\'ery's lecture
\cite{Apery}. The forms $Q_b$ also show up in several other investigations
of elliptic curves with primes $p = a^2 + 4b^2$ as parameters (see e.g. 
\cite{ST}), and we have selected Ap\'ery's article mainly because his 
presentation was the least polished. Ap\'ery starts by recalling a 
conjecture made by Mordell in Debrecen 1968: for each prime 
$p \equiv 5 \bmod 8$, the curve
\begin{equation}\label{Ap1} 
      y^2 = px^4 + 1
\end{equation}
has a nontrivial (i.e., $(x,y) \ne (0, \pm 1)$) rational point 
(this is also predicted by the more general parity conjecture).
Since (\ref{Ap1}) is a curve of genus $1$ with a rational point,
it is an elliptic curve. The conjecture that (\ref{Ap1}) has 
nontrivial rational points is equivalent to the conjecture that
the elliptic curve has Mordell-Weil rank $1$.

Consider more generally an elliptic curve 
$$ E: y^2 = x(x^2 + Ax + B) $$ 
defined over $\Q$ with a rational torsion point $(0,0)$ of order $2$.
Each rational affine point on $E$ has the form $(x,y)$ with 
$$ x = b_1 \frac{m}{e^2}, \quad y = b_1 \frac{mn}{e^3}, $$
and comes from a rational point on one of the torsors
$$ T: n^2 = b_1m^4 +am^2e^2 + b_2e^4 $$
with $b_1b_2 = B$ and $b_1$ squarefree.
 
The curve 
$$ E': y^2 = x(x^2 + A'x + B'), \quad  \text{with} \quad
        A' =  -2Ax \ \text{and} \ B' = A^2-4B, $$ 
is $2$-isogenous to $E$. 

Specializing to $A = 0$ and $B = p$ we find that the elliptic curves
\begin{equation}\label{Eell}
   E: y^2 = x(x^2 + p)  \quad \text{and} \quad E': y^2 = x(x^2 - 4p)
\end{equation}
have the torsors
$$  n^2 = pm^4 + e^4 \quad \text{and} \quad 
    n^2 = m^4 - 4pe^4, \quad  n^2 = pm^4 - 4e^4. $$ 
By the theory of $2$-descent on elliptic curves, (\ref{Eell})
has rank $r \le 1$, with equality if and only if the torsor
$n^2 = pm^4 - 4e^4$ has a nontrivial rational point.

Now consider the torsor
$$  Z^2 = pX^4 - 4Y^4 $$
(we are using Ap\'ery's notation) and write $p = a^2 + 4b^2$.
From 
$$ pX^4 = Z^2 + 4Y^4 = (Z + 2iY^2)(Z - 2iY^2) $$
we get, using unique factorization in $\Z[i]$,  
$$ Z + 2iY^2 = (a+2bi)(\xi + i \eta)^4, $$
where $X = \xi^2 + \eta^2$. Comparing real and imaginary parts yields
\begin{equation}\label{EYxi}
 Y^2 = b(\xi^4 - 6\xi^2\eta^2 + \eta^4) + 2a\xi\eta(\xi^2 - \eta^2). 
\end{equation}
Setting $r = \xi^2 - \eta^2$ and $s = 2\xi\eta$, we find
\begin{equation}\label{EAp}
   X^2 = r^2 + s^2, \qquad Y^2 = br^2 + ars - bs^2. 
\end{equation}
Given the last pair of equations we parametrize the Pythagoren 
equation $X^2 = r^2 + s^2$ via $r = \xi^2 - \eta^2$, $s = 2\xi\eta$
and $X = \xi^2 - \eta^2$, plug the results into the second equation
and retrieve (\ref{EYxi}). 

Thus finding a rational point on $E$ boils down to finding a 
simultanous representation of squares for the pair of forms
$Q = (1,0,1)$ and $Q_b(b,a,-b)$.

\bigskip\noindent{\bf Example.}
For $p = 797$, we have $a = 11$ and $b = 13$. We find
\begin{align*}
  \xi & = &    1462  & &  \eta & = &     771 \\
   X  & = & 2731885  & &   Y   & = & 1773371 \\
   x  & = & \frac{5948166935620325}{3144844703641} & &   
   y  & = & \frac{458544116976814482315845}{5576976396940543811}
\end{align*}
since $x = p\frac{X^2}{Y^2}$ and $y = p\frac{XZ}{Y^3} $, where
$Z = 210600981540301$.

\section{Cyclic Quartic Fields}

In the late 1940s, Hasse was interested in the explicit arithmetic
of abelian extensions; in 1948 he presented a memoir \cite{HasEK} 
on the computation of unit groups and class numbers of cyclic cubic 
and quartic fields, and in 1952 he published his book \cite{HasAZ} 
on the investigation of class numbers of abelian number fields.

The quadratic form $Q_b$ shows up in Hasse's treatment of cyclic
quartic number fields in \cite{HasEK}. Since this work has remained
largely obscure we would like to provide as much background as is
necessary to begin to appreciate Hasse's results.

Let $K/\Q$ be a cyclic quartic extension; let $F$ denote its
conductor, and $H$ the subgroup of the group $D$ of nonzero ideals 
in $\Q$ coprime to $F$ that corresponds to this extension by 
class field theory. Thus $D/H \simeq \Gal(K/\Q)$; let $\chi$ be the
ray class character on $D/H$, and $T = -\sum \chi(t)^{-1} e(t)$
the corresponding Gauss sum. 

Let $k$ denote the quadratic subfield of $K$; its conductor $f$
divides $F$, hence we can write $F = fG$ for some integer $G$.
Hasse proves that there exist integers $a, b$  such that 
$f = a^2 + 4b^2$ and 
$$ K = \Q \bigg(\sqrt{\chi(-1) G \frac{f + a\sqrt{f}}2}\,\bigg). $$

His first main result is a description of an integral basis of $K$
in terms of invariants of the field, that is, in terms of Gauss sums.
If $\tau$ denotes the Gauss sum attached to $\chi^2$, then the
algebraic integers in $k$ have the form $\frac12(x + y \tau)$
with $x \equiv \tau y \bmod 2$. Hasse succeded in determining the
ring of integers in $K$ in a similar way. In fact, for elements
$x \in k$ and $y \in \Q(i)$ he observes that every element of $K$
can be represented in the form 
$$ [x,y] = \frac12 \bigg( x + \frac{y\,T + \by\, \tT}2 \bigg), $$
where $\by$ denotes the complex conjugate of $y$ and where 
$\tT = \chi(-1) \bT$.

The ring of integers in $K$ consists of all elements $[x,y]$
with $x \in \cO_k$ and $y \in \Z[i]$ such that 
$$ x \equiv F \cdot 
     \frac{\Tr(\frac{1+i}2 y) + \Tr(\frac{1-i}2 y)\,\tau}2 \bmod 2, $$
where $\Tr$ denotes the trace of $\Q(i)/\Q$.     

Now Hasse observes that the product $x(y\,T + \by\, \tT)$ can be written
in the form
$$ x(y\,T + \by\, \tT) = (x \circ y)\,T + \ov{x \circ y}\, \tT, $$
where 
$$ x \circ y = \frac{x_0y + x_1(a-2bi) \by}2, \qquad
\text{with $x = \frac12(x_0 + x_1 \tau)$ and $y \in \Z[i]$}. $$
This defines an action of $k^\times$ on $\Q(i)^\times$ (which induces an 
action of $\cO_k$ on $\Z[i]$); in fact, the operator product  $x \circ y$
has the following formal properties:

\begin{lem}
For all $x \in k^\times$ and all $y \in \Q(i)$ we have
\begin{enumerate}
\item $1 \circ y = y$.
\item $(x_1 x_2) \circ y = x_1 \circ (x_2 \circ y)$.
\item $x \circ y$ is $\Q$-bilinear: $qx \circ y = x \circ qy = q(x \circ y)$
      for all $q \in \Q$. 
\item $x \circ iy = i(x' \circ y)$, where $x'$ is the conjugate of $x$.
\item $x \circ y = 0$ if and only if $x = 0$ or $y = 0$.
\end{enumerate}
\end{lem}
Hasse uses this action for simplifying the numerical computation 
(he was working with pencil and paper!) of products in the number 
field $K$. The computation of squares, for examples, is
facilitated by the observation that 
$$ [x,y]^2 = \bigg[\frac12\Big(x^2 + \chi(-1)G 
            \frac{N(y)f + \phi(y) \tau}{2}\Big), x \circ y \bigg], $$
where $N$ and $S$ denote the trace in $\Q(i)/\Q$ and where $\phi$ is 
the binary quadratic form defined for $y = r + si$ by
$$ \phi(y) = \frac12 S((a+2bi)y^2) = ar^2 - 4brs - as^2. $$
In the expressions giving the action of the Galois group on $[x,y]$, the form 
$$ \hphi(y) = - \frac14 S(i(a+2bi)y^2) = br^2 + ars - bs^2 $$
shows up, for which Hasse observes the identity (see \cite[(19)]{HasEK}
$$ \hphi(\alpha \circ y) = N(\alpha) \hphi(y) $$
for $\alpha \in \cO_k$ and $y \in \Z[i]$.

Hasse's goal was characterizing the unit groups of cyclic quartic
extensions in a way similar to the real quadratic case, where the
fundamental unit is uniquely determined by the minimal solution of 
the Pell equation $T^2 - mU^2 = 4$.

In the real cyclic case $K/\Q$, let $k$ denote the quadratic
subfield of $K$. The unit group $E_K$ of $K$ is described by 
the following invariants:
\begin{enumerate}
\item the unit group $E_k$ of the quadratic subfield,
\item the group $E_{K/k}$ of relative units satisfying $N_{K/k}\eta = \pm 1$,
      and 
\item the unit index $Q = (E_K:E_{K/k}E_k)$.
\end{enumerate}
These invariants are characterized as follows:
\begin{enumerate}
\item The unit index $Q$ is either $1$ or $2$;
\item There is a unit $\eta$ such that the group $E_{K/k}$ is generated 
      by $-1$, $\eta$ and its conjugate $\eta'$.
\end{enumerate}
This unit $\eta$ can be chosen in an essentially unique way among its
conjugates etc., and this unit is then called {\em the} relative 
fundamental unit of $K$. Hasse's main result is

\begin{thm}
The relative fundamental unit $\eta$ of $K$ is the relative unit 
$\eps \ne \pm 1$ with the property that $|S_{K/\Q}(\eps^2)|$ is minimal.
\end{thm}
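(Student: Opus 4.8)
The plan is to linearize everything through the logarithmic embedding and to exploit the cyclic Galois action, which turns the lattice of relative units into a module over $\Z[i]$; the statement then collapses to a single elementary inequality for $\cosh$.

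First I would fix a generator $\sigma$ of $\Gal(K/\Q)\cong\Z/4\Z$, so that $k$ is the fixed field of $\sigma^2$ and $N_{K/k}\eps=\eps\cdot\sigma^2(\eps)$. Write $S_{K/\Q}=\Tr_{K/\Q}$, fix one real embedding, and set $\eps^{(j)}=\sigma^j(\eps)\in\R$. A relative unit satisfies $\eps^{(0)}\eps^{(2)}=\eps^{(1)}\eps^{(3)}=\pm1$, so with $x_0=\log|\eps^{(0)}|$ and $x_1=\log|\eps^{(1)}|$ one has $x_2=-x_0$, $x_3=-x_1$ and
\[
   S_{K/\Q}(\eps^2)=\sum_j(\eps^{(j)})^2=2\cosh 2x_0+2\cosh 2x_1=:2\,g(z),\qquad z=x_0+ix_1 .
\]
Thus $|S_{K/\Q}(\eps^2)|=2g(z)$, and minimizing it amounts to minimizing $g$ on the logarithmic lattice $\Lambda\subset\C$ of $E_{K/k}$; since $\eps\mapsto z$ has kernel $\{\pm1\}$, $\Lambda\cong E_{K/k}/\{\pm1\}$ is free of rank $2$.

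The structural point is that $\sigma$ preserves $E_{K/k}$ and, acting by the cyclic shift on $(x_0,x_1,x_2,x_3)$, sends $z\mapsto x_1-ix_0=-iz$. Hence $\Lambda$ is stable under multiplication by $i$, so it is a finitely generated torsion-free module over the principal ideal domain $\Z[i]$, necessarily free of rank $1$: $\Lambda=\Z[i]\,w$, where $w$ is the vector of $\eta$ and $-iw$ that of $\eta'=\sigma(\eta)$, with $\{w,-iw\}$ a $\Z$-basis. Since every relative unit is $\pm\eta^a(\eta')^b$, its vector lies in $\Z w+\Z(-iw)=\Z[i]\,w$, i.e.\ equals $\alpha w$ for some $\alpha\in\Z[i]$, and the four units $\alpha\in\{\pm1,\pm i\}$ give precisely $\eta^{\pm1},(\eta')^{\pm1}$, the relative fundamental unit up to the admissible ambiguity (conjugation, inversion, sign). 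As $g(iw)=g(w)$, the four units all give the same value of $g$, so it suffices to prove that $g(\alpha w)>g(w)$ for every non-unit $\alpha\in\Z[i]\setminus\{0\}$ (that is, $|\alpha|\ge\sqrt2$), with equality only at $w=0$.

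This inequality is the main obstacle, and I would isolate it as a lemma in two real variables. Writing $w=p+qi$, $\alpha=m+ni$, $\alpha w=A+Bi$, and using $\cosh2t+\cosh2u=2\cosh(t+u)\cosh(t-u)$, set $\Phi(Z)=\cosh Z_1\cosh Z_2$, $Y=(p+q,\,p-q)$ and $X=(A+B,\,A-B)$, so that $g(w)=2\Phi(Y)$ and $g(\alpha w)=2\Phi(X)$. A direct computation gives $X=\smatr{m}{n}{-n}{m}Y$, whence $|X|=|\alpha|\,|Y|$ with $|\alpha|\ge\sqrt2$. The claim then follows from the two pointwise bounds
\[
   \cosh\sqrt{Z_1^2+Z_2^2}\ \le\ \Phi(Z)\ \le\ \cosh^2\!\sqrt{\tfrac{Z_1^2+Z_2^2}{2}}
\]
through the chain $\Phi(X)\ge\cosh|X|=\cosh(|\alpha|\,|Y|)\ge\cosh(\sqrt2\,|Y|)\ge\tfrac12\bigl(1+\cosh(\sqrt2\,|Y|)\bigr)\ge\Phi(Y)$, the penultimate step being just $\cosh(\sqrt2\,|Y|)\ge1$. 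Both bounds I would prove by comparing Taylor coefficients: the lower one reduces to $\binom{2n}{2j}\ge\binom{n}{j}$, and the upper one to the superadditivity of $t\mapsto\cosh\sqrt t-1$, a power series with positive coefficients and no constant term. Equality holds throughout only for $w=0$, which is excluded since $\eta$ is not a root of unity; hence the minimum of $|S_{K/\Q}(\eps^2)|$ over relative units $\eps\ne\pm1$ is attained exactly at the associates of $\eta$, which proves the theorem.
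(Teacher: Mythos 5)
The paper gives no proof of this theorem at all: it is quoted as Hasse's main result from \cite{HasEK}, so the only thing to compare your argument with is Hasse's original memoir, not anything in this text. Your proof is, as far as I can check, correct and self-contained. The reduction to the logarithmic lattice is sound: for a totally real cyclic quartic $K$ and a relative unit $\eps$ one indeed has $x_2=-x_0$, $x_3=-x_1$ and $S_{K/\Q}(\eps^2)=2\cosh 2x_0+2\cosh 2x_1>0$; the observation that $\sigma$ acts on $z=x_0+ix_1$ as multiplication by $-i$, so that the lattice of relative units modulo $\pm1$ is a free rank-one $\Z[i]$-module $\Z[i]w$, is exactly the right structural input --- note that it re-proves, rather than assumes, the statement made in the text just before the theorem that $E_{K/k}$ is generated by $-1$, $\eta$ and $\eta'$. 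The linear algebra $X=\smatr{m}{n}{-n}{m}Y$ and $|X|=|\alpha|\,|Y|$ checks out, and both $\cosh$ inequalities are true: the lower one by the coefficient comparison $\binom{n}{j}\le\binom{2n}{2j}$, and the upper one because, after writing $\cosh Z_1\cosh Z_2=\frac12\bigl(\cosh(Z_1+Z_2)+\cosh(Z_1-Z_2)\bigr)$ and $\cosh^2u=\frac12(1+\cosh 2u)$, it becomes precisely the superadditivity of $h(t)=\cosh\sqrt{t}-1$ applied to $a=(Z_1+Z_2)^2$ and $b=(Z_1-Z_2)^2$; I would spell out that one-line reduction, since as written the appeal to superadditivity looks like a non sequitur until one performs the product-to-sum substitution. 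Strictness comes from the step $\cosh(\sqrt2\,|Y|)>\frac12\bigl(1+\cosh(\sqrt2\,|Y|)\bigr)$ for $|Y|>0$, which holds because $w\ne0$. Altogether the minimizers of $|S_{K/\Q}(\eps^2)|$ among relative units $\ne\pm1$ are exactly the associates $\pm\eta^{\pm1}$, $\pm(\eta')^{\pm1}$, which is the assertion of the theorem up to the admissible ambiguity in the choice of $\eta$.
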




\end{document}